\newtheorem{lemma}{Lemma}[section]
\newtheorem{thm}[lemma]{Theorem}
\theoremstyle{definition}
\newtheorem{definition}[lemma]{Definition}
\newtheorem{remark}[lemma]{Remark}
\newtheorem{example}[lemma]{Example}
\newtheorem{proposition}[lemma]{Proposition}
\newtheorem{corollary}[lemma]{Corollary}
\def\Z{\mathbb Z}
\def\N{\mathbb N}
\def\Q{\mathbb Q}
\def\A{\mathcal A}
\def\B{\mathcal B}
\def\C{\mathcal C}
\def\pf{\begin{proof}}
\def\pfk{\end{proof}}
\def\le{\leqslant}
\def\leq{\leqslant}
\def\ge{\geqslant}
\def\geq{\geqslant}
\begin{document}

\begin{frontmatter}

\title{$k$-block parallel addition versus $1$-block parallel addition\\ in non-standard numeration systems}

\author[liafa]{Christiane~\textsc{Frougny}}
\author[doppler]{Pavel~\textsc{Heller}}
\author[doppler]{Edita~\textsc{Pelantov\'a}\corref{cor1}}
\author[doppler]{Milena~\textsc{Svobodov\'a}}

\cortext[cor1]{Corresponding author}
\address[liafa]{LIAFA, UMR 7089 CNRS \& Universit\'e Paris 7\\
Case 7014, 75205 Paris Cedex 13, France}
\address[doppler]{Doppler Institute for Mathematical Physics and Applied Mathematics,\\
and Department of Mathematics, FNSPE, Czech Technical University, \\
Trojanova 13, 120 00 Praha 2, Czech Republic}


\begin{abstract}

Parallel addition in integer base is used for speeding up
multiplication and division algorithms.  $k$-block parallel
addition has been introduced by Kornerup in \cite{Kornerup}:
instead of manipulating single digits, one works with blocks of
fixed length~$k$. The aim of this paper is to investigate how such
notion influences the relationship between the base and the
cardinality of the alphabet allowing parallel addition. In this
paper, we mainly focus on a certain class of real bases --- the
so-called Parry numbers. We give lower bounds on the cardinality
of alphabets of non-negative integer digits allowing block
parallel addition. By considering quadratic Pisot bases, we are
able to show that these bounds cannot be improved in general and
we give explicit parallel algorithms for addition in these cases.
We also consider the $d$-bonacci base, which satisfies the
equation $X^d = X^{d-1} + X^{d-2} + \cdots + X + 1$. If in a base
being a $d$-bonacci number $1$-block parallel addition is possible
on the alphabet~$\A$, then $\#\A \geq d+1$; on the other hand,
there exists a $k\in\N$ such that $k$-block parallel addition in
this base is possible on the alphabet $\{0,1,2\}$, which cannot be
reduced. In particular, addition in the Tribonacci base is
$14$-block parallel on alphabet $\{0,1,2\}$.

\end{abstract}


\begin{keyword}
Numeration system, addition, parallel algorithm.
\end{keyword}

\end{frontmatter}


\section{Introduction}


This work is a continuation of our two papers~\cite{FrPeSv1} and
\cite{FrPeSv2} devoted to  the study of parallel addition. Suppose
that two numbers $x$ and $y$ are given by their expansion
$x=\bullet x_1x_2\cdots$ and $y=\bullet y_1y_2\cdots$ in a given
base~$\beta$, and the digits $x_j$'s and $y_j$'s are elements of a
digit set $\A$. A parallel algorithm to compute their sum
$z=x+y=\bullet z_1z_2\cdots$ with $z_j \in \A$ exists when the
digit $z_j$ can be determined by the examination of a window of
fixed length around the digit $(x_j+y_j)$. This avoids carry
propagation.

Parallel addition has received a lot of attention, because the
complexity of the addition of  two numbers becomes constant, and
so it is used for internal addition in multiplication and division
algorithms, see \cite{ErcegovacLang} for instance.

A parallel algorithm for addition has been given by
Avizienis~\cite{Avizienis} in 1961; there,  numbers are
represented in base $\beta = 10$ with digits from the set $\A =
\{-6,-5,\ldots,5,6\}$. This algorithm has been generalized to any
integer base $\beta \ge 3$. The case $\beta =2$ and alphabet $\A =
\{-1,0,1\}$ has been elaborated by Chow and
Robertson~\cite{ChowRobertson} in 1978. It is known that the
cardinality of an alphabet allowing parallel addition in integer
base $\beta \ge 2$ must be at least equal to $\beta +1$.

We consider non-standard numeration systems, where the base is a
real or complex number $\beta$ such  that $|\beta|>1$, and the
digit set $\A$ is a finite alphabet of contiguous integer digits
containing~$0$. If parallel addition in base~$\beta$ is possible
on $\A$, then $\beta$ must be an algebraic number.

In \cite{FrPeSv1}, we have shown that if $\beta$ is an algebraic
number, $|\beta |>1$, such that all  its conjugates in modulus
differ from~$1$, then there exists a digit set $\A\subset \Z$ such
that addition on $\A$ can be performed in parallel. The proof
gives a method for finding a suitable alphabet~$\A$ and provides
an algorithm --- a generalization of Avizienis' algorithm --- for
parallel addition on this alphabet. But the obtained digit set
$\A$ is in general quite large, so in \cite{FrPeSv2} we have given
lower bounds on the cardinality of minimal alphabets (of
contiguous integers containing~$0$) allowing parallel addition for
a given base~$\beta$.

In \cite{Kornerup}, Kornerup has proposed a more general concept
of parallel addition. Instead of  manipulating single digits, one
works with blocks of fixed length~$k$. So, in this terminology,
the ``classical" parallel addition is just $k$-block parallel
addition with $k=1$.

The aim of this article is to investigate how the Kornerup's
generalization influences the  relationship between the base and
the alphabet for parallel addition, in the hope of reducing the
size of the alphabet. For instance, consider the Penney numeration
system with the complex base $\beta = \imath -1$,
see~\cite{Penney}. We know from \cite{FrPeSv2} that $1$-block
parallel addition in base $\imath-1$ requires an alphabet of
cardinality at least $5$, whereas Herreros in \cite{Herreros}
gives an algorithm for $4$-block parallel addition on the alphabet
$\A = \{-1,0,1\}$.

\bigskip

The paper is organized as follows. Definitions and previous
results are  recalled in Section~\ref{Prel}. In
Section~\ref{necessary}, we show that for an algebraic base with a
conjugate of modulus~$1$, block parallel addition is never
possible, Theorem~\ref{conjugate1}.

Then we consider a simple Parry number~$\beta$ whose R\'enyi
expansion of  unity $d_\beta(1) = t_1t_2\cdots t_m$ is such that $1 \leq
t_m  \leq t_i$  for $1 \le i \le m$, and we show that if block
parallel addition in this base is possible on the alphabet $\A =
\{0,1,\ldots, M\}$, then $M\geq t_1+t_m$, Theorem~\ref{Simple}.

For a non-simple Parry number~$\beta$ with the R\'enyi expansion
of unity  of the form $d_\beta(1) = t_1 t_2 \cdots t_m (t_{m+1}
t_{m+2} \cdots t_{m+p})^\omega$, one proves that if block parallel
addition is possible in base~$\beta$ on alphabet $\A = \{0, 1,
\ldots, M\}$, then $M \geq 2t_1-t_2 -1$, provided that a certain
set of conditions is satisfied, as described in detail in
Theorem~\ref{nonSimple}.

By considering quadratic Pisot bases, we are able to show that the
two  previously mentioned (lower) bounds for Parry numbers cannot
be improved in general. We give explicit parallel algorithms for
addition in these two cases (simple quadratic Parry numbers, and
non-simple quadratic Parry numbers).

The main result of Section~\ref{kblockhelps} is
Theorem~\ref{Pavel}, which  implies that there are many bases for
which the Kornerup's concept of block parallel addition reduces
substantially the size of the alphabet.

A number $\beta>1$ is said to {\em satisfy the (PF) Property} if
the sum of  any two positive numbers with finite greedy
$\beta$-expansion in base~$\beta$ has its greedy $\beta$-expansion
finite as well. We deduce that if $\beta >1$ satisfies the (PF)
Property, then there exists a $k \in \N$ such that $k$-block
parallel addition is possible on the alphabet $\A =
\{0,1,\ldots,2\lfloor \beta \rfloor\}$.

We then consider a class of well studied Pisot numbers, that
generalize  the golden mean $\frac{1+\sqrt{5}}{2}$. Let  $d$ be in $\N$, $d\geq 2$. The
real root $\beta >1$ of the equation $X^d = X^{d-1}+X^{d-2}+\cdots
+ X +1$ is said to be the {\em $d$-bonacci number}. These numbers
satisfy the (PF) Property. If, in base a~$d$-bonacci number
$1$-block parallel addition is possible on the alphabet~$\A$, then
$\#\A \geq d+1$; moreover, there exists some $k \in \N$ such that
$k$-block parallel addition is possible on the alphabet $\A =
\{0,1,2\}$, and this alphabet cannot be further reduced. In
particular, addition in the Tribonacci base is $14$-block parallel
on $\A = \{0,1,2\}$.

Part of our results concerns only  non-negative alphabets. The
reason is  simple. For non-negative alphabet a strong tool ---
namely the greedy expansions of numbers --- can be applied when
proving theorems. That is why  we recall some properties of the
greedy expansions in Section~\ref{Renyi}.


\section{Preliminaries}\label{Prel}


\subsection{Numeration systems}\label{Renyi}


For a detailed presentation of these topics, the reader may consult~\cite{cant}.

A \textit{positional numeration system} $(\beta,\A)$ within the
complex field  $\mathbb{C}$ is defined by a \textit{base} $\beta$,
which is a complex number such that $|\beta|>1$, and a
\textit{digit set}  $\A$ usually called the \textit{alphabet},
which is a subset of $\mathbb{C}$. In what follows, $\A$ is finite
and contains~$0$. If a complex number $x$ can be expressed in the
form $ \sum_{-\infty \le j\leq n} x_j\beta^j$ with coefficients
$x_j$ in $\A$, we call the sequence $(x_j)_{-\infty \le j\leq n}$
a $(\beta, \A)$\textit{-representation} of $x$ and note
$x=x_nx_{n-1}\cdots x_0\bullet x_{-1}x_{-2} \cdots$. If a $(\beta,
\A)$\textit{-representation} of $x$ has only finitely many
non-zero entries, we say that it is {\em finite} and the trailing
zeroes are omitted.

In analogy with the classical algorithms for arithmetical
operations, we work  only on the set of numbers with \emph{finite}
representations, i.e., on the set
\begin{equation}\label{Fin_A_beta}
{\rm Fin}_{\A}(\beta) = \Bigl\{ \ \sum_{j\in I}\ {x_j\beta^j} \mid I \subset \Z,  \ \ I \ \ \hbox{finite}, \ \ x_j \in \A\Bigr\}.
\end{equation}
Such a finite sequence $(x_j)_{j\in I}$ of elements of $\A$ is
identified with a  bi-infinite string $(x_j)_{j\in \Z}$ in
$\A^\Z$, where only a finite number of digits $x_j$ have non-zero
values.

\bigskip

When the base is a real number, the domain has been extensively
studied. The  best-understood case is the one of representations
of real numbers in a base $\beta >1$, the so-called {\em greedy
expansions}, introduced by R\'enyi \cite{Renyi}. Every number $x
\in [0,1]$ can be given a $\beta$-expansion by the following {\em
greedy algorithm}:
\begin{equation}\label{Renyi_expansion}
r_0:=x {\rm ; \ for \ } j\ge 1 {\rm \ put \ } x_j:=\lfloor \beta r_{j-1}\rfloor {\rm \ and \ } r_j:= \beta r_{j-1} - x_j.
\end{equation}

\noindent Then $x=\sum_{j \ge 1} x_j \beta^{-j}$, and the digits
$x_j$ are  elements of the so-called {\em canonical alphabet}
$\C_\beta = \{ 0,1, \ldots, \lceil \beta \rceil -1\}$. For $x \in
[0,1)$, the sequence $(x_j)_{j \ge 1}$ is said to be the {\em
R\'enyi expansion} or the {\em $\beta$-greedy expansion} of $x$.

The greedy algorithm applied to the number~$1$ gives the
$\beta$-expansion of~$1$, denoted by $d_\beta(1)=(t_j)_{j\geq 1}$,
and plays a special role in this theory. We define also the {\em
quasi-greedy expansion} $d_\beta^*(1)=(t_j)_{j\geq 1}$ by: if
$d_\beta(1)=t_1 \cdots t_m$ is finite, then $d_\beta^*(1)=(t_1
\cdots t_{m-1}(t_m-1))^\omega$, otherwise
$d_\beta^*(1)=d_\beta(1)$. A number $\beta>1$ such that
$d_\beta(1)$ is eventually periodic, that is to say, of the form
$t_1 \cdots t_m(t_{m+1} \cdots t_{m+p})^\omega$ is called a {\em
Parry  number}. If $d_\beta(1)$ is finite, $d_\beta(1)=t_1 \cdots
t_m$, then $\beta$ is a {\em simple} Parry number.

Some numbers have more than one $(\beta,\C_\beta)
$-representation. The greedy  expansion of $x$ is
lexicographically the greatest among all $(\beta,\C_\beta)
$-representations of $x$.

A sequence $(x_j)_{j\geq 1}$ is said to be {\em
$\beta$-admissible} if it is the  greedy expansion of some $x \in
[0,1)$. Let us stress that not all sequences over the
alphabet~$\C_\beta $ are $\beta$-admissible.
Parry in ~\cite{Parry} used the {\em quasi-greedy expansion}
$d_\beta^*(1)=(t_j)_{j\geq 1}$ of~$1$ for characterization of
$\beta$-admissible sequences:  Let $s=(s_j)_{j\geq 1} =
s_1s_2s_3\cdots$ be an infinite sequence of non-negative integers.
The sequence $s$ is $\beta$-admissible if and only if for all $k
\ge 1$ the inequality $s_ks_{k+1} \cdots \prec_{lex}  d_\beta^*(1)$ holds in
the lexicographic order.

A $(\beta,\C_\beta) $-representation $x_nx_{n-1} \ldots x_0\bullet
x_{-1}x_{-2}\cdots$  of a number $x \geq 1$ is called
the $\beta$-greedy expansion of $x$, if the sequence $x_nx_{n-1}
\ldots x_0x_{-1}x_{-2}\cdots$ is $\beta$-admissible.

\bigskip

Some real bases introduced in~\cite{FrSo} have a property which is
interesting in  connection with parallel addition. A number
$\beta>1$ is said to {\em satisfy the (PF) Property} if the sum of
any two positive numbers with finite greedy $\beta$-expansions in
base~$\beta$ has a greedy $\beta$-expansion which is finite as
well, that is to say, every element of $\N[\beta^{-1}] \cap [0,1)$
has a finite greedy $\beta$-expansion. A number $\beta > 1$ is
said to {\em satisfy the (F) Property} if every element of
$\Z[\beta^{-1}] \cap [0,1)$ has a finite greedy $\beta$-expansion.
Of course, the (F) Property implies the (PF) Property.

If $\beta>1$ has the (PF) Property, then $\beta$ is a {\em Pisot
number}, i.e., $\beta$ is  an algebraic integer with  all its
algebraic conjugates of modulus strictly less than~$1$. But there
exist also Pisot numbers not satisfying the (PF) Property.

In \cite{FrSo}, two classes of Pisot numbers with the (PF) Property are presented:
\begin{itemize}
    \item $\beta$ has the (F) Property, and thus the (PF) Property as well, if $d_\beta(1) = t_1 t_2\cdots t_m$ and $t_1 \geq t_2 \geq \cdots \geq t_m \geq 1$.
    \item $\beta $ has the (PF) Property if $d_\beta(1) = t_1t_2\cdots t_mt^\omega$ and $t_1\geq t_2\geq \cdots \geq t_m > t \geq 1$.
\end{itemize}

In particular, every quadratic Pisot number satisfies the (PF) Property.


\subsection{Parallel addition}


Let us first formalize the notion of parallel addition as it is
considered in  most of works concentrated  on this topic,
including our recent papers.
\begin{definition}\label{local}
A function $\varphi : \A^{\Z} \rightarrow \B^{\Z}$  is said to be
{\em $p$-local}  if there exist two non-negative integers $r$ and
$t$ satisfying $p=r+t+1$, and a function $\Phi:\A^p\rightarrow \B$
such that, for any $u=(u_j)_{j \in \Z} \in \A^{\Z}$ and its image
$v=\varphi(u)=(v_j)_{j\in \Z} \in \B^{\Z}$, we have
$v_{j}=\Phi(u_{j+t} \cdots u_{j-r})$ for every $j$ in $\Z$.
\end{definition}

This means that the image of $u$ by $\varphi$ is obtained through
a sliding  window of length~$p$. The parameter $r$ is called the
\emph{memory} and the parameter $t$ is called the
\emph{anticipation} of the function $\varphi$. Such functions,
restricted to finite sequences, are computable by a parallel
algorithm in constant time.

\begin{definition}\label{digitsetconv}
Given a base~$\beta$ with $|\beta| >1$ and two alphabets $\A$ and
$\B$ of contiguous integers containing~$0$, a {\em digit set
conversion} in base~$\beta$ from $\A$ to $\B$ is a function
$\varphi: \A^\Z \rightarrow \B^\Z $ such that
\begin{enumerate}
    \item for any $u=(u_j)_{j \in \Z} \in \A^\Z$ with a finite number
    of non-zero digits, the image $v=(v_j)_{j\in \Z} = \varphi(u)\in \B^{\Z}$
    has only a finite number of non-zero digits as well, and
    \item $\sum\limits_{j\in \Z} v_j\beta^j = \sum\limits_{j\in \Z} u_j\beta^j$.
\end{enumerate}
Such a conversion is said to be {\em computable in parallel} if it is
a $p$-local function for some $p \in \N$.
\end{definition}

Thus, addition in ${\rm Fin}_{\A}(\beta)$ is computable in parallel if
there exists a digit set conversion in base~$\beta$ from $\A+\A$ to $\A$
which is computable in parallel.

Let us stress that all alphabets we use are formed by contiguous integers
 and contain~$0$. This restriction already forces the base~$\beta$ to be
 an algebraic number. In \cite{FrPeSv1} we give a sufficient condition
 on $\beta$ to allow parallel addition:

\begin{thm}\label{alphabetExists}
Let $\beta$ be an algebraic number such that $|\beta |>1$ and all
its conjugates  in modulus differ from~$1$. Then there exists an
alphabet~$\A$ of contiguous integers containing~$0$ such that
addition on ${\rm Fin}_{\A}(\beta)$ can be performed in parallel.
\end{thm}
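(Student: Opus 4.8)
The plan is to reduce, via Definition~\ref{digitsetconv} and the remark following it, to the construction of one parallel digit-set conversion in base~$\beta$ from $\A+\A$ to $\A$, with $\A$ a set of contiguous integers containing~$0$ that I fix large enough only at the end. Since $\beta$ is algebraic, choose $m(X)=\sum_{i=0}^{d}a_iX^{i}\in\Z[X]$ with $m(\beta)=0$ and $a_0a_d\neq0$. For every $n$ one has $\sum_{i=0}^{d}a_i\beta^{\,n+i}=0$, so adding an integer multiple $q$ of the vector $(a_0,a_1,\dots,a_d)$ to the digits in positions $n,n+1,\dots,n+d$ does not change the value $\sum_j w_j\beta^{j}$. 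These value-preserving moves are the only operations the conversion will use.

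Starting from the digit-wise sum $w=(w_j)$ with $w_j=x_j+y_j\in\A+\A$, I want an integer carry sequence $q=(q_j)$ such that the output $z_n:=w_n+\sum_{i=0}^{d}a_iq_{n-i}$ lies in $\A$ for all $n$, while each $q_j$ is produced by a single local rule $q_j=F(w_{j+t}\cdots w_{j-r})$; by Definition~\ref{local} this makes the conversion $(r+t+1)$-local, generalizing Avizienis' algorithm. Writing $E(X):=\sum_j(w_j-z_j)X^{j}$, the requirement that $w$ and $z$ represent the same number forces $E(\beta)=0$, hence $E=m\cdot Q$ for a bi-infinite integer carry polynomial $Q(X)=\sum_j q_jX^{j}$. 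Because $m$ may have roots both inside and outside the unit disk, $Q$ cannot in general be one-sided, so the carry must be pushed in \emph{both} directions; this is why both the memory $r$ and the anticipation $t$ are needed.

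The heart of the argument, and the step I expect to be the main obstacle, is to produce a carry $Q$ with bounded coefficients, realized by a finite window, for which $z=w-mQ$ lies entirely in $\A$. The propagation of carries when an out-of-range digit is reduced is governed by the expansion of $1/m(X)$, which I would split into partial fractions and group by poles: those at conjugates $\beta_i$ with $|\beta_i|>1$ are expanded as a power series in $X$, giving a carry that propagates towards higher positions, and those with $|\beta_i|<1$ as a series in $X^{-1}$, giving a carry towards lower positions. Since the overflow of $w$ is bounded and no $|\beta_i|$ equals~$1$, both expansions converge strictly away from the unit circle, so the corresponding carries have geometrically decaying --- in particular bounded --- coefficients; this is the single place where the hypothesis on the conjugates is essential. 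The truly delicate point is then to pass from geometric decay of the influence of a position on a distant output digit to an \emph{exactly} finite window: here I would use that the $q_j$ are integers confined to a fixed bounded region and invoke a finiteness (pumping) argument showing that digits beyond a fixed distance cannot change the current output, so that the rule $F$ depends on a window of fixed length.

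Finally I would check the two conditions of Definition~\ref{digitsetconv}: value preservation is automatic since every carry uses the relation $m(\beta)=0$, and finiteness of the support of $z$ follows because the carries vanish outside a bounded neighbourhood of the support of $w$. Choosing $\A$ large enough that $z_n\in\A$ always holds --- its size being controlled by the coefficients $a_i$ and by how far the conjugates lie from the unit circle --- yields a $p$-local conversion from $\A+\A$ to $\A$, which by the discussion after Definition~\ref{digitsetconv} means that addition on ${\rm Fin}_{\A}(\beta)$ is computable in parallel.
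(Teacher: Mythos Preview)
Note first that the present paper does not prove this theorem: it is quoted from~\cite{FrPeSv1}, and the only information given here is that the proof there is constructive, producing an explicit generalization of Avizienis' algorithm together with a concrete alphabet. So there is no ``paper's own proof'' to compare line by line; I can only compare your outline with what that description implies.

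Your overall architecture --- use an integer polynomial $m$ vanishing at $\beta$, look for an integer carry sequence $q$ with $z_n=w_n+\sum_i a_i q_{n-i}\in\A$, and split the carry propagation into a ``towards higher indices'' part governed by the conjugates of modulus $>1$ and a ``towards lower indices'' part governed by those of modulus $<1$ --- is the right picture and matches the spirit of~\cite{FrPeSv1}. The place where your outline is not yet a proof is the passage from the Laurent/partial-fraction expansion of $1/m(X)$ to an \emph{integer}, \emph{locally computable} carry. The Laurent coefficients of $1/m$ in the annulus separating the two groups of roots are real (algebraic) numbers, not integers, so the $Q$ you obtain by convolving with them is not integer-valued; you then assume ``the $q_j$ are integers confined to a fixed bounded region'' precisely at the point where this has to be \emph{proved}. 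Geometric decay of influence is a useful heuristic, but it does not by itself produce an integer-valued local rule, and the pumping/finiteness step you allude to needs the integrality already in hand to make sense.

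What~\cite{FrPeSv1} does instead (and what would close your gap) is to avoid inverting $m$ altogether: one exhibits, using the hypothesis that no conjugate lies on the unit circle, an integer Laurent polynomial $R(X)=\sum_{i=-s}^{r} c_i X^{i}$ with $R(\beta)=0$ and a single coefficient dominating the sum of the absolute values of the others. With such an $R$ one can define the carry $q_j$ directly from $w_j$ (or from a fixed-size window), exactly as in Avizienis' scheme, and both integrality and locality are immediate; the size of $\A$ then comes out of the coefficients of $R$. Reworking your Step~3 to \emph{construct} this dominant-coefficient relation --- rather than to expand $1/m$ --- would turn your sketch into a proof.
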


The proof of the previous theorem  gives a method for finding a
suitable  alphabet~$\A$ and provides an algorithm for parallel
addition on this alphabet. But, in general, the alphabet~$\A$
obtained in this way is quite large. An exaggerated size of the
alphabet does not allow to compare numbers by means of the
lexicographic order on their $(\beta, \A)$-representations. For
instance, in base $\beta = 2$ and alphabet $\A = \{0,1,2\}$, we
have $02\prec_{lex}10$ in the lexicographic order, but $x= \bullet 02 \not<
y= \bullet 10$.

Therefore, in \cite{FrPeSv2}, we have studied the cardinality of
minimal  alphabets allowing parallel addition for a given
base~$\beta$. In particular, we have found the following lower
bounds:

\begin{thm}\label{zdola}
Let $\beta$, with $|\beta| > 1$, be an algebraic integer of
degree~$d$ with  minimal polynomial $f(X) = X^d - a_{d-1} X^{d-1}
- a_{d-2} X^{d-2} - \cdots - a_1 X - a_0$. Let $\A$ be an alphabet
of contiguous integers containing~$0$ and~$1$. If addition in
${\rm Fin}_{\A}(\beta)$ is computable in parallel, then $\# \A
\geq |f(1)|$. If, moreover, $\beta$ is a positive real number,
$\beta > 1$, then $\# \A \geq |f(1)| +2$.
\end{thm}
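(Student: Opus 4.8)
The plan is to attach to every number a single scalar invariant --- its digit sum reduced modulo $f(1)$ --- and then to play this invariant against the locality of the conversion by feeding in long constant blocks. Write $\A = \{m, m+1, \ldots, M\}$ with $m \le 0 < 1 \le M$, so that $\#\A = M-m+1 \ge 2$ and $\A+\A = \{2m, \ldots, 2M\}$ has $2\#\A - 1$ elements. First I would record the reformulation: parallel addition means there is a $p$-local, value-preserving digit-set conversion $\varphi : (\A+\A)^\Z \to \A^\Z$ with window $\Phi$, memory $r$ and anticipation $t$, $p = r+t+1$ (Definition~\ref{local}). Since $\beta$ is an algebraic integer with monic minimal polynomial $f$, the kernel of the evaluation map $(u_j)_j \mapsto \sum_j u_j \beta^j$ on finitely supported integer strings is exactly $f(X)\cdot\Z[X,X^{-1}]$. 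Evaluating any multiple $g(X)f(X)$ at $X=1$ shows that the digit sum $\sum_j u_j$, reduced modulo $f(1)$, depends only on the value $\sum_j u_j\beta^j$. Set $F = |f(1)|$; note $F \ge 1$, since $1$ is not a root of $f$ (otherwise $f(X)=X-1$ and $\beta=1$, contradicting $|\beta|>1$). Thus $\varphi$ preserves this residue.

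The central step I would prove is the following transfer lemma: for every $c \in \A+\A$ there is some $c' \in \A$ with $c \equiv c' \pmod F$. To obtain it, feed $\varphi$ the input $w$ equal to $c$ on the positions $\{0,1,\ldots,L-1\}$ and $0$ elsewhere, with $L$ large. By locality the output $z = \varphi(w)$ has a constant bulk value $c' = \Phi(c,\ldots,c) \in \A$, while its behaviour near each of the endpoints $0$ and $L-1$ is a fixed finite profile, independent of $L$ as soon as $L > 2p$. Hence $\sum_j z_j = c'L + C$ for a constant $C$, whereas $\sum_j w_j = cL$. Preservation of the invariant gives $cL \equiv c'L + C \pmod F$ for all large $L$; subtracting the instances $L$ and $L+1$ yields $c \equiv c' \pmod F$, as claimed.

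Now the counting. Residues of a block of consecutive integers form a contiguous arc in $\Z/F\Z$. The lemma says the arc of $\A+\A$ is contained in the arc of $\A$. If $\#\A < F$, the arc of $\A$ has size exactly $\#\A$, while the arc of $\A+\A$ has size $\min(2\#\A-1, F)$; since $2\#\A-1 > \#\A$ for $\#\A \ge 2$, the latter cannot fit inside the former. This contradiction forces $\#\A \ge F = |f(1)|$. For real $\beta > 1$ I would then add a two-sided slack argument: here the embedding of $\beta$ is order-preserving, so the value $v_L = c\frac{\beta^L-1}{\beta-1}$ of the constant input grows monotonically and its dominant size is governed by the top nonzero digit of the output. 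Applying this to the two extreme inputs $c = 2M$ and $c = 2m$, the forced bulk values are pinned not only modulo $F$ but also in magnitude, and a comparison of leading terms shows that the contiguous interval $\A$ must strictly overshoot any complete residue system on \emph{both} ends --- one extra digit to absorb an upward carry and one to absorb a downward carry --- giving $\#\A \ge |f(1)| + 2$.

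The modular half (the $|f(1)|$ bound) is routine once the constant-input lemma is in place; the hard part is this last magnitude step. Turning the heuristic ``carries must be absorbed on both sides'' into the sharp $+2$ requires controlling, inside the real embedding, the leading behaviour of the $\A$-representation of the long constant blocks, and in particular ruling out that the extreme residues $2M \bmod F$ and $2m \bmod F$ are already realized by boundary (wrap-around) digits lying inside the residue-covering interval. Making that exclusion precise --- rather than merely plausible --- is where I expect the real work to lie.
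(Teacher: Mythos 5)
The first thing to note is that the paper itself contains no proof of Theorem~\ref{zdola}: it is recalled verbatim from \cite{FrPeSv2}, so the comparison can only be against that reference and against correctness. Your argument for the first inequality $\#\A\ge|f(1)|$ is correct and complete, and it is essentially the standard (and the cited paper's) argument: since $f$ is monic, every integer Laurent polynomial vanishing at $\beta$ lies in $f(X)\cdot\Z[X,X^{-1}]$, so the digit sum modulo $F=|f(1)|$ is an invariant of the represented value; feeding the constant block $c^L$ and using locality (together with $\Phi(0^p)=0$, which follows from condition~1 of Definition~\ref{digitsetconv} applied to the zero string) forces $\Phi(c^p)\equiv c\pmod F$; and the residue count $\min(2\#\A-1,F)>\#\A$ for $\A+\A$ versus $\#\A$ for $\A$ yields the contradiction when $\#\A<F$.

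The second inequality is where the proposal has a genuine gap, and the gap is conceptual, not bookkeeping. Your plan is to pin the bulk digits $\psi(2M)=\Phi((2M)^p)$ and $\psi(2m)=\Phi((2m)^p)$ ``in magnitude'' by comparing leading terms of the value of $(2M)^L0^\omega$ with the value of its image. Carrying that out, the coefficient of $\beta^L$ gives $E=\frac{2M-\psi(2M)\beta^{-t}}{\beta-1}$, where $E$ is the contribution of the $r+t$ top boundary digits; bounding $E\le M\frac{\beta^{r}-\beta^{-t}}{\beta-1}$ yields only $2M\le M\beta^{r}+(\psi(2M)-M)\beta^{-t}$, which is satisfiable for every window with $\beta^r\ge 2$ and produces no contradiction. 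The purely modular information fares no better: $\psi(2M)\equiv 2M$, $\psi(2M)\le M$ give $\psi(2M)\le 2M-F$, and symmetrically $\psi(2m)\ge 2m+F$ when $m<0$; combining with $m\le\psi(2M)$ and $\psi(2m)\le M$ gives only $2F\le 3(\#\A-1)$, far weaker than $\#\A\ge F+2$. The case $\beta=2$ is a useful sanity check: there $F=1$, your invariant is vacuous, and the whole content of the theorem is the $+2$; what actually rules out a two-letter alphabet is unbounded carry propagation, i.e., an order-theoretic argument about representations in a real base (of the kind this paper deploys in Lemma~\ref{coincidence} and Theorems~\ref{Simple} and~\ref{nonSimple}), not a leading-term estimate on constant blocks. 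As written, the $+2$ half is an acknowledged heuristic, not a proof, and the specific mechanism you propose for it demonstrably does not close.
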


In \cite{Kornerup}, Kornerup suggested a more general concept of
parallel  addition. Instead of manipulating single digits, one
works with blocks of digits with fixed block length~$k$. For the
precise description of the Kornerup's idea, we introduce the
notation
\begin{equation}
\A_{(k)} = \{ a_0+a_1\beta +\cdots + a_{k-1}\beta^{k-1}\mid a_i \in \A\} \, ,
\end{equation}
where $\A$ is an alphabet and~$k$ a positive integer. Clearly, $\A_{(1)} = \A$.

\begin{definition}\label{kblock}
Given a base~$\beta$ with $|\beta| >1$ and two alphabets $\A$ and
$\B$ of  contiguous integers containing~$0$, a {\em digit set
conversion} in base~$\beta$ from $\A$ to $\B$ is said to be {\em
block parallel computable} if there exists some $k \in \N$ such
that the digit set conversion in base $\beta^k$ from $\A_{(k)}$ to
$ \B_{(k)}$ is computable in parallel. When the specification
of~$k$ is needed, we say {\em $k$-block parallel computable}.
\end{definition}

In this terminology,  the original parallel addition is $1$-block parallel addition.

\begin{remark}\label{int-base}
Suppose that the base is an integer $\beta$ with $|\beta| \ge 2$.
It is known  that $1$-block parallel addition is possible on an
alphabet of cardinality $\#\A = \beta+1$ (see \cite{Parhami} and
\cite{FrPeSv2}). But $k$-block parallel addition on an
alphabet~$\A$ is just $1$-block parallel addition in integer base
$\beta^k$ on $\A_{(k)}$. Thus $k$-block parallel addition in
integer base~$\beta$ can only be possible on an alphabet~$\A$ such
that $\#\A_{(k)} \ge \beta^k+1$. This shows that $k$-block
parallel addition with $k \ge 2$ does not allow the use of any
smaller alphabet than already achieved with $k=1$.
\end{remark}

The bound from Theorem~\ref{zdola} on the minimal cardinality of
alphabet~$\A$  cannot be applied to block parallel addition. This
fact can be demonstrated on the Penney numeration system with the
complex base $\beta = \imath -1$. The minimal polynomial of this
base is $X^2+2X+2$. From Theorem~\ref{zdola} we get that $1$-block
parallel addition in base $\imath-1$ requires an alphabet of
cardinality at least $5$, whereas Herreros in \cite{Herreros} gave
an  algorithm for $4$-block parallel addition on the alphabet
$\{-1,0,1\}$. According to our up-to-now knowledge, the base
$\beta =\imath-1$ is the only known example where the Kornerup
block approach  to  sequences of digits reduces the size of the
needed alphabet.


\section{Necessary conditions for  existence of block parallel addition}\label{necessary}


\subsection{General result}


In \cite{FrPeSv1} we have shown that the assumption that all the
algebraic  conjugates of $\beta$ have modulus different from~$1$
enables $1$-block parallel addition on ${\rm Fin}_{\A}(\beta)$ for
some suitable alphabet $\A \subset \Z$. The following theorem
shows that this assumption is also necessary and, even more, the
generalization of parallelism via working with $k$-blocks does not
change the situation.

\begin{thm}\label{conjugate1}
Let the base $\beta\in \mathbb{C}$, $|\beta| > 1$, be an algebraic number
with a  conjugate $\gamma$ of modulus $|\gamma|=1$ and let $\A
\subset \Z$ be an alphabet of contiguous integers containing~$0$.
Then addition on $\A$ cannot be block parallel computable.
\end{thm}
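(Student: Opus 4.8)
The plan is to argue by contradiction using the algebraic conjugate $\gamma$ with $|\gamma|=1$ as a ``test map''. Suppose that block parallel addition were possible; by Definition~\ref{kblock} this means there is some $k\in\N$ and a parallel (hence $p$-local) digit set conversion in base $\beta^k$ from $(\A+\A)_{(k)}$ to $\A_{(k)}$. Replacing $\beta$ by $\beta^k$ and $\A$ by the finite integer alphabet $\A_{(k)}=\{\sum_{i=0}^{k-1}a_i\beta^i: a_i\in\A\}$, it suffices to derive a contradiction from the existence of a $1$-block parallel conversion in base $\beta^k$, since $\gamma^k$ is a conjugate of $\beta^k$ and $|\gamma^k|=1$ as well. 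So the whole statement reduces to the $k=1$ case, and the real work is to show that an ordinary $p$-local carry-free conversion cannot coexist with a modulus-$1$ conjugate.

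First I would fix the field automorphism $\sigma$ of $\Q(\beta)$ sending $\beta\mapsto\gamma$, and for any finite $(\beta,\B)$-representation $w=(w_j)$ I would consider the two evaluations $\pi_\beta(w)=\sum_j w_j\beta^j$ and $\pi_\gamma(w)=\sum_j w_j\gamma^j=\sigma(\pi_\beta(w))$. Since the conversion $\varphi$ preserves the value in base $\beta^k$, applying $\sigma$ shows it also preserves the value in base $\gamma^k$. The idea is to feed $\varphi$ a carefully chosen infinite family of inputs on which the $\beta^k$-values stay controlled while the $\gamma^k$-side, because $|\gamma^k|=1$, cannot absorb carries into a window of bounded length. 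Concretely, because $|\gamma|=1$ the powers $\gamma^n$ do not decay, so a local rule that outputs a bounded alphabet must reproduce, digit block by digit block, a quantity whose $\gamma$-evaluation matches a target that can be made to oscillate or grow without the damping that $|\beta|>1$ normally provides. I would exploit the $p$-locality: the output block at position $j$ depends only on input blocks in a fixed window $[j-r,\,j+t]$, so two inputs that agree on that window must produce the same output there.

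The key combinatorial step is a pumping/pigeonhole argument. Consider inputs that are eventually periodic (or take a long constant run of some nonzero digit of $\A+\A$); the local function $\Phi$ forces the output to be eventually periodic with a period dictated by the window, and I would compare the $\gamma$-evaluations of the repeated blocks. Because $\varphi$ is value-preserving under $\sigma$ and the number of windowed configurations is finite, two distinct partial sums must produce outputs whose $\gamma$-evaluated ``tails'' coincide, forcing a nontrivial relation $\sum_j c_j\gamma^j=0$ with bounded integer coefficients $c_j$ arising from the difference of two valid outputs. Summing a bounded-coefficient series against powers of a unit-modulus $\gamma$ and using the boundedness of $|\gamma^j|$, I would show this relation cannot hold for all the inputs in the family unless the alphabet is infinite, contradicting $\A\subset\Z$ finite. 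The main obstacle, and the part requiring the most care, is precisely this last estimate: turning the qualitative statement ``$|\gamma|=1$ prevents carry absorption'' into a rigorous contradiction, since unlike the $|\gamma|\neq1$ case one cannot simply sum a convergent geometric tail, and one must instead extract a genuine algebraic dependence (or a violation of the Parry/admissibility growth constraints) from the interaction of finiteness of $\A$, $p$-locality, and the non-decay of $\gamma^n$.
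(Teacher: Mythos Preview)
Your intuition is correct---the field embedding $\sigma:\beta\mapsto\gamma$ and the fact that $|\gamma|=1$ prevents carry absorption is exactly the mechanism---but the concrete step you propose does not close. A pumping/pigeonhole argument on periodic inputs yields at best a nontrivial relation $\sum_j c_j\gamma^j=0$ with bounded integer $c_j$; such relations can perfectly well exist (indeed $\gamma$ is algebraic), so this is not a contradiction, and you acknowledge yourself that you do not see how to finish. The difficulty you flag is real, and your proposal does not resolve it.

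The paper's argument avoids this entirely by a direct growth estimate rather than an algebraic-dependence argument. The two missing ideas are: (i) because $|\gamma|=1$, infinitely many powers $\gamma^j$ satisfy $\Re(\gamma^j)>\tfrac12$, so $\max_{x_j\in\A}\bigl|\Re\bigl(\sum_{j=0}^{kN-1}x_j\gamma^j\bigr)\bigr|$ can be made larger than any prescribed bound by taking $N$ large; (ii) a $p$-local rule applied to $x+x$ with $x=\sum_{j=0}^{kN-1}x_j\beta^j$ produces at most $kp$ extra digits on each end, and since $|\gamma^{kN}|=|\gamma^{-kp}|=1$ those end-blocks contribute at most a \emph{fixed} constant $S=\max\{|\sum_{j=0}^{pk-1}a_j\gamma^j|:a_j\in\A\}$ to the $\gamma$-evaluation, independent of $N$. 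Choosing $N$ so that the maximal real part $T=|\Re(x')|$ exceeds $3S$, one gets $2|\Re(x')|=|\Re(2x')|\le 2S+T$, i.e.\ $T\le 2S$, a contradiction. No periodicity or pigeonhole is needed.

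A smaller point: your reduction ``replace $\beta$ by $\beta^k$ and $\A$ by $\A_{(k)}$'' does not literally return you to the hypotheses of the theorem, since $\A_{(k)}\subset\Z[\beta]$ is not an alphabet of integers. This is harmless in practice (the paper just carries $k$ through the estimates), but it means you cannot simply invoke the $k=1$ case of the stated theorem.
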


\begin{proof}
Within the proof, we denote by $\Re (x)$ the real part of a
complex number $x$.  Let us assume that there exist $k, p \in \N$
such that $\Phi: \A^p_{(k)} \to \A_{(k)}$ performs $k$-block
parallel addition on $\A$. Denote $S:= \max \Bigl\{
\Bigl|\sum\limits_{j=0}^{pk-1}a_j\gamma^j\Bigr| : a_j \in
\A\Bigr\}$. Since there exist infinitely many $j \in \N$ such that
$\Re (\gamma^j) > \frac12$, one can find $N > p$ and
$\varepsilon_j \in \{0,1\}$ such that $\Re
\Bigl(\sum\limits_{j=0}^{kN-1} \varepsilon_j \gamma^j\Bigr) > 3S$.

Let $T:=\max \{\bigl| \Re \bigl(\sum_{j=0}^{kN-1} b_j\gamma^j\bigr)\bigr|\ :\
b_j\in \A\}$.  Then find $x = \sum\limits_{j=0}^{kN-1} x_j\beta^j$
such that $\left|\Re (x')\right| = T$, where $x'$ denotes the image of $x$
under the field isomorphism $\mathbb{Q}(\beta) \to
\mathbb{Q}(\gamma)$. The choice of $N$ ensures $\left|\Re (x')\right| > 3S$. Adding $x+x$ by the
$k$-block $p$-local function $\Phi$, we get
$$ x+x = \sum_{j=kN}^{k(N+p)-1} z_j\beta^j + \sum_{j=0}^{kN-1} z_j\beta^j+ \sum_{j=-kp}^{-1} z_j\beta^j\,, \ \ \hbox{with } z_j \in \A\,.$$
For the image of $x+x$ under the field isomorphism, we have
$$\left|\Re  (x')\right| + 3S < \left|\Re  (x'+x')\right| \leq |\gamma^{kN}|S +  \left|\Re (x')\right| + |\gamma^{-kp}|S \leq 2S + \left|\Re (x')\right|, $$
which is a contradiction.
\end{proof}


\subsection{Positive real bases}


Since the integer base case has been resolved in
Remark~\ref{int-base}, in the  following we suppose that $\beta$
is not an integer.

For positive bases $\beta$ belonging to some classes of Parry
numbers  we deduce  lower bound on the size of the alphabet $\A
\subset \N$ allowing block parallel addition. For a non-negative
alphabet we utilize the well known properties of the greedy
representations, which are in the lexicographic order the greatest
ones among all representations. At first we state a simple
observation we will use in our later considerations.

\begin{lemma}\label{coincidence}
Let $\beta >1$ be a base and let $\A = \{0,1,\ldots, M\}$ with $M
\ge 1$ be an  alphabet. Let $z= g_0\bullet g_1g_2 \cdots $ be a
$(\beta, \A)$-representation of $z$ such that there exists $n \ge
0$ such that for $0 \le i \le n$ the inequality
\begin{equation}\label{MtoOmega}
1\bullet g_{i+1}g_{i+2}g_{i+3} \cdots > 0\bullet M^\omega
\end{equation}
holds true. Then any lexicographically smaller $(\beta,
\A)$-representation of $z$  coincides with the original
representation on the first $n+1$ digits, i.e., it has the form $z
= g_0\bullet g_1g_2 \cdots g_nz_{n+1}z_{n+2}\cdots$.
\end{lemma}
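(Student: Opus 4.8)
The plan is to argue by locating the first digit position at which a lexicographically smaller representation departs from the given one, and to show that this position cannot lie among the first $n+1$ digits. Write the competing representation as $z = z_0 \bullet z_1 z_2 \cdots$ with $z_j \in \A$, aligned with $g_0 \bullet g_1 g_2 \cdots$ at the radix point, and suppose it is lexicographically smaller. Let $i \ge 0$ be the least index with $z_i \ne g_i$; by lexicographic minimality $z_j = g_j$ for $j < i$ and $z_i < g_i$, hence $z_i \le g_i - 1$. I would assume for contradiction that $i \le n$ and derive a violation of the hypothesis~\eqref{MtoOmega}.

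First I would note that since both sequences represent the same number $z$ and agree on all digits of index below $i$, their tails starting at index $i$ carry equal value: multiplying $\sum_{j \ge i} g_j \beta^{-j} = \sum_{j \ge i} z_j \beta^{-j}$ by $\beta^i$ gives $g_i \bullet g_{i+1} g_{i+2} \cdots = z_i \bullet z_{i+1} z_{i+2} \cdots$ as real numbers. Next I would bound the right-hand side from above: since every digit lies in $\{0, 1, \ldots, M\}$, the fractional part satisfies $\sum_{k \ge 1} z_{i+k} \beta^{-k} \le \sum_{k \ge 1} M \beta^{-k} = \frac{M}{\beta - 1} = 0\bullet M^\omega$, so that $z_i \bullet z_{i+1} z_{i+2} \cdots \le z_i + (0\bullet M^\omega) \le (g_i - 1) + (0\bullet M^\omega)$.

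Combining the equality of the two tails with this bound yields $g_i + (0\bullet g_{i+1} g_{i+2} \cdots) \le (g_i - 1) + (0\bullet M^\omega)$, and subtracting $g_i - 1$ from both sides rewrites this exactly as $1\bullet g_{i+1} g_{i+2} \cdots \le 0\bullet M^\omega$. This contradicts the strict inequality~\eqref{MtoOmega} assumed to hold for every $i$ with $0 \le i \le n$. Therefore the first index of disagreement satisfies $i \ge n+1$, which is precisely the assertion that $z_j = g_j$ for $0 \le j \le n$.

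There is no genuinely hard step here; the whole argument rests on the elementary observation that the largest value a $\beta$-tail over $\A = \{0,\ldots,M\}$ can attain is $\frac{M}{\beta-1} = 0\bullet M^\omega$. The only points requiring care are the bookkeeping of the radix-point alignment between the two representations, so that the lexicographic comparison and the tail identity refer to the same digit indices, and the strict-versus-non-strict nature of the inequalities, since it is exactly the strictness in~\eqref{MtoOmega} that closes the contradiction.
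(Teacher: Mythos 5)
Your proof is correct and follows essentially the same route as the paper's: locate the first index $i$ of disagreement, use equality of the tails $g_i\bullet g_{i+1}\cdots = z_i\bullet z_{i+1}\cdots$ together with the bound $0\bullet z_{i+1}z_{i+2}\cdots \le 0\bullet M^\omega$ to deduce $1\bullet g_{i+1}g_{i+2}\cdots \le 0\bullet M^\omega$, contradicting \eqref{MtoOmega} when $i\le n$. The paper compresses this into the single chain $0\bullet M^\omega \ge 0\bullet z_{i+1}z_{i+2}\cdots = (g_i-z_i)\bullet g_{i+1}g_{i+2}\cdots \ge 1\bullet g_{i+1}g_{i+2}\cdots$, which is exactly your computation.
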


\begin{proof}
Let $z = z_0\bullet z_1z_2 \cdots z_nz_{n+1}z_{n+2}\cdots$ be a
lexicographically  smaller representation of $z$ and $i$ be the
minimal index for which $z_i < g_{i}$. Then
$$ 0\bullet M^\omega \geq 0 \bullet z_{i+1}z_{i+2} \cdots = (g_i-z_i)\bullet g_{i+1}g_{i+2} \cdots  \geq 1\bullet g_{i+1}g_{i+2} \cdots. $$
Since for $i\leq n$ the opposite inequality \eqref{MtoOmega} holds, necessarily $i\geq n+1$.
\end{proof}


\subsubsection{Simple Parry numbers}\label{Parry_simple}


\begin{thm}\label{Simple}
Let $d_\beta(1) = t_1t_2\cdots t_m$ with $m \ge 2$ and $1\leq t_m
\leq t_i$ for  $1 \le i \le m$ be the R\'enyi expansion of~$1$ in
non-integer base~$\beta$. If block parallel addition can be
performed on alphabet $\A = \{0,1,\ldots, M\}$, then $M\geq
t_1+t_m$.
\end{thm}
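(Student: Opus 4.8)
The goal is to show that if $k$-block parallel addition works on $\A = \{0,1,\ldots,M\}$, then $M$ must be at least $t_1 + t_m$. My plan is to argue by contradiction: assume $M < t_1 + t_m$, i.e.\ $M \le t_1 + t_m - 1$, and exhibit a number $z$ whose addition cannot be carried out by any $k$-block $p$-local conversion. The central idea is to exploit the locality of the function $\Phi$ together with the rigidity of greedy expansions captured by Lemma~\ref{coincidence}. A $p$-local function determines each output block by looking only at a bounded window; if I can build an input whose correct output is forced to depend on information arbitrarily far away, I reach a contradiction.

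**Constructing the critical input.**

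The key feature of the hypothesis $1 \le t_m \le t_i$ is that $d_\beta(1) = t_1 t_2 \cdots t_m$ is the largest admissible word, and the relation $1 = \sum_{i=1}^m t_i \beta^{-i}$ gives me a rewriting rule. I would take as input something like two copies of a digit string whose digit-sum in a high position equals $t_1 + t_m$, arranged so that the greedy (lexicographically greatest) representation of the sum $z$ must, at the relevant position, carry a digit $t_1 + t_m$ that exceeds $M$. I then use Lemma~\ref{coincidence}: if I can arrange that $1 \bullet g_{i+1} g_{i+2} \cdots > 0 \bullet M^\omega$ holds at the position in question, then every representation of $z$ over $\A$ with digits $\le M$ is \emph{forced} to agree with the greedy one on a long prefix. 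By inserting a long block of repeated digits realizing the expansion of unity, I can make the length $n$ of forced coincidence exceed the window size $pk$ of $\Phi$, so the local rule cannot simultaneously produce the required digit here and the required digit far away.

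**The main obstacle.**

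The hard part is making the inequality \eqref{MtoOmega} hold at the decisive position while keeping the construction compatible with $\beta^k$-block arithmetic. Concretely, I must verify that the string I build really is a $(\beta,\A)$-representation of the intended $z$, that its leading relevant digit genuinely equals $t_1+t_m$ (forcing $M \ge t_1 + t_m$ if no carry-free alternative exists), and that the condition $1 \bullet g_{i+1}\cdots > 0\bullet M^\omega$ is satisfied for all $i$ up to a length I control. The inequality $M \le t_1 + t_m - 1$ should be exactly what guarantees $1 \bullet (\cdots) > 0 \bullet M^\omega$, since $M^\omega$ represents a value below $1$ whenever $M$ is small relative to the expansion of unity; this is where the assumption $t_m \le t_i$ enters, ensuring that the tail governed by $d_\beta(1)$ dominates $M^\omega$. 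I expect the delicate bookkeeping to be in translating the $1$-block admissibility and lexicographic comparisons into the $\beta^k$/$\A_{(k)}$ setting of Definition~\ref{kblock}, and in checking that the forced-coincidence length can be pushed beyond any fixed window $p$ by lengthening the repeated block, thereby contradicting $p$-locality.
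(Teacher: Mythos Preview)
Your overall strategy---contradiction, locality, Lemma~\ref{coincidence}---matches the paper, but a key idea is missing. You frame the contradiction as arising from a \emph{single} input $z$ whose ``correct output is forced to depend on information arbitrarily far away.'' That is not a contradiction to $p$-locality: for any one fixed input, a local rule can be hard-coded to produce the required output. The contradiction must come from \emph{two} inputs that agree on a long window yet whose admissible outputs over $\A$ are incompatible on that window.

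The paper does exactly this. Writing ${\rm Per}=t_1\cdots t_{m-1}(t_m-1)$, it considers $z=\bullet({\rm Per})^n t_1$ and $y=\bullet (M{+}1)t_2\cdots t_{m-1}(t_m-1)({\rm Per})^n t_1\cdots t_m$. Both share the long block $({\rm Per})^n$ at the \emph{same} positions. Using Lemma~\ref{coincidence} together with $M=t_1+t_m-1$ and $t_m\le t_i$, every $(\beta,\A)$-representation of $z$ must keep ${\rm Per}$ starting at positions $1+mi$; but a separate classification of all finite $(\beta,\A)$-representations of $1$ (your proposal has no analogue of this ``Statement~0'') forces every $(\beta,\A)$-representation of $y$ to be $1\bullet t_m$ or $1\bullet(t_m-1)({\rm Per})^\ell t_1\cdots t_m$, so ${\rm Per}$ starts at positions $2+mi$. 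Since ${\rm Per}$ is not a power of a single letter (here $m\ge 2$ and $t_m\le t_1$ with $d_\beta(1)$ admissible), a local function seeing the same window $({\rm Per})^{p}$ in both inputs cannot output ${\rm Per}$ at two different phases---the contradiction. Your sketch lacks this second input, the phase-shift mechanism, and the classification of representations of~$1$; the remark that the ``leading digit equals $t_1+t_m$, forcing $M\ge t_1+t_m$ if no carry-free alternative exists'' is not the right lever, because carry-free alternatives on $\A$ \emph{do} exist (e.g.\ $y=1\bullet t_m$). Finally, your worry about translating everything into the $\beta^k$-setting is unnecessary: the paper argues entirely at the level of single digits, using only that a $k$-block $p$-local map has bounded window in the original base.
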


\begin{proof}
Let $\Phi:{\A^p_{(k)}} \to \A_{(k)}$ be the function performing $k$-block
 parallel addition on the alphabet $\A = \{0,1,\ldots, M\}$. Let us suppose
 that the theorem does not hold. Without loss of generality, we can  set $M= t_1+t_m-1$.\\

Since $d_\beta(1) = t_1t_2\cdots t_m$, one can easily find other representations
of~$1$, namely
$$1\bullet = \bullet \bigl(t_1t_2\cdots t_{m-1}(t_m-1)\bigr)^\omega$$
and
\begin{equation}\label{unit}
\hbox{for\ } n\in\N, \quad 1\bullet = \bullet \bigl(t_1t_2\cdots t_{m-1}(t_m-1)\bigr)^n t_1t_2\cdots t_m.
\end{equation}
Denote the periodic factor by
\begin{equation}\label{Per}
{\rm Per}=t_1t_2\cdots t_{m-1}(t_m-1),
\end{equation}
as it will be used in the sequel several times. The value $\bullet M^\omega$ is the
largest fractional part one can obtain in our alphabet, as the base is
positive. The other representation on $\A$ of $\bullet M^\omega$ is
$$ \bullet M^\omega = 1\bullet \bigl((M-t_1)(M-t_2)\cdots (M-t_{m-1}) (M-t_{m}+1)\bigr)^\omega\,.$$
As $t_1\geq t_i\geq  t_m$, we have $0\leq M-t_i = t_1 +t_m-1 - t_i \leq t_1-1$ for $i=2,3,\ldots, m-1$,
further $M-t_1 = t_m-1 < t_2$ and $M-t_{m}+1 = t_1$. Thus the representation on the right side is
the greedy one. Moreover, because of $(M-t_1)(M-t_2) = (t_m-1)(t_1+t_m-t_2-1) \prec_{lex} (t_m-1)t_1 \prec_{lex}  t_m$, we have in particular
\begin{equation}\label{boundMomega}
0\bullet M^\omega < 1\bullet (t_m-1)t_1 < 1\bullet t_m
\end{equation}

\noindent\textbf{Statement 0}:\quad {\it The only finite
representations of~$1$ in  base~$\beta$ on $\A$ are listed in
\eqref{unit}.}

\begin{proof}
Let us denote the digits of the string $({\rm Per})^\omega$ by
$t_1^*t_2^*t_3^*\cdots$,  i.e., $t_i^* = t_i-1$ if $i = 0 \mod m$,
and $t_i^* = t_i$ otherwise. Clearly $1\bullet = \bullet
t_1^*t_2^*t_3^*\cdots$ is an infinite representation of~$1$. Using
\eqref{boundMomega}, we have
$$1\bullet t_i^*t_{i+1}^*t_{i+2}^*\cdots > 1\bullet t_i^*  = 1\bullet t_i \geq 1\bullet t_m > 0\bullet M^\omega \quad \hbox{if } \ i \neq 0 \mod m$$
and, analogously,
$$1\bullet t_i^*t_{i+1}^*t_{i+2}^*\cdots > 1\bullet (t_m-1)t_1  > 0\bullet M^\omega  \quad \hbox{if } \ i = 0 \mod m$$
Applying Lemma \ref{coincidence}, we can conclude that any other
representation $0\bullet x_1x_2x_3 \cdots $ of~$1$ must be
lexicographically bigger than $0\bullet t_1^*t_2^*t_3^*\cdots$.
Let us denote by~$k$ the smallest index such that $x_k > t_k^*$.
Obviously, $x_{k}\bullet x_{k+1}x_{k+2}x_{k+3}\cdots =
t_k^*\bullet t_{k+1}^*t_{k+2}^*t_{k+3}^*\cdots$, and therefore
$$1\bullet \leq (x_k-t_k^*)\bullet x_{k+1}x_{k+2}x_{k+3}\cdots = 0\bullet t_{k+1}^*t_{k+2}^*t_{k+3}^*\cdots \leq 0\bullet t_1^*t_2^*t_3^*\cdots = 1\bullet , $$
 where the last inequality follows from the Parry condition. As both tails of the previous
 row are equal to the same number, the inequalities can be replaced by
equalities. In particular, it means that $0\bullet x_{k+1}x_{k+2}x_{k+3}\cdots = 0$
and $x_k= t^*_k+1$ and $({\rm Per})^\omega =  t_1^*t_2^*t_3^*\cdots  = (t_1^*t_2^*\cdots t_{k}^*)^\omega$.
And thus~$k$ is a multiple of $m$, as desired.
\end{proof}

Fix $n \in \N$. During the course of the proof we will work with the
 following two numbers:
\begin{equation}\label{z_y}
z=\bullet ({\rm Per})^n t_1 \quad \hbox{and} \quad y=\bullet (M+1)t_2t_3\cdots t_{m-1}(t_m-1)({\rm Per})^n t_1t_2\cdots t_m.
\end{equation}

First, we show three auxiliary statements about numbers $z$ and $y$.\\[2mm]

\noindent\textbf{Statement 1}:\quad {\it Any representation of $z= \bullet({\rm Per})^n t_1$
in base~$\beta$ on alphabet $\A = \{0,1,\ldots, M\}$ has the form $\bullet ({\rm Per})^n z_{mn+1} z_{mn+2}\cdots$\,.}
\begin{proof}
Since $\bullet({\rm Per})^n t_1$ is the greedy representation of $z$, any other
representation is lexicographically smaller. According to Lemma \ref{coincidence},
it is enough to show that for any~$k$, $0 \leq k < n$,
\begin{equation}\label{middle}
1\bullet t_j\cdots t_{m-1}(t_m-1) ({\rm Per})^k t_1 > 0\bullet M^\omega \quad \hbox{for any } j=1,2, \ldots, m-1,
\end{equation}
and
\begin{equation}\label{ending}
1\bullet (t_m-1) ({\rm Per})^k t_1 > 0\bullet M^\omega .
\end{equation}
Both inequalities follow from \eqref{boundMomega} and the assumption $t_m\leq t_i$.
\end{proof}

\noindent\textbf{Statement 2}:\quad {\it The greedy expansion of $y=\bullet (M+1)t_2t_3\cdots t_{m-1}(t_m-1)({\rm Per})^n t_1t_2\cdots t_m$ in base~$\beta$ is $1\bullet t_m$\,.}
\begin{proof}
The statement follows from the fact $1\bullet  = \bullet ({\rm Per})^{n+1}t_1t_2\cdots t_m$.
\end{proof}

\noindent\textbf{Statement 3}:\quad {\it Any finite non-greedy representation of
$y$ in base~$\beta$ on alphabet $\A = \{0,1,\ldots, M\}$
has the form $1\bullet (t_m-1)({\rm Per})^\ell t_1t_2\cdots t_m$ with $\ell \geq 0$ in $\N$.}

\begin{proof}
It follows from  Statement~0.
\end{proof}

Let us now finish the proof of the theorem. For all $n \in \N$, according to Statement~1,
 the sequence $0\bullet ({\rm Per})^{n}t_1$ has to be rewritten by the local function $\Phi$
 into the sequence $0\bullet ({\rm Per})^{n}w$, where $w \in \A^*$. It means that the
 periodic word ${\rm Per}$ starts at the same
positions (namely $1+mi$ for $i = 0,1, \ldots, n-1$) after the $\bullet$ in the original
string as well as in the string rewritten by the function $\Phi$.

Consider now the sequence $\bullet (M+1)t_2t_3\cdots t_{m-1}(t_m-1)({\rm Per})^n t_1t_2\cdots t_m$.
Let us stress that the length of the preperiod $(M+1)t_2t_3\cdots t_{m-1}(t_m-1)$ is the same
 as the length of the period ${\rm Per}$, and thus the string ${\rm Per}$ starts at the positions $1+mi$ for $i = 0,1, \ldots, n$.

According to Statement~3, the sequence $\bullet (M+1)t_2t_3\cdots t_{m-1}(t_m-1)({\rm Per})^n t_1t_2\cdots t_m$
has to be rewritten into $1\bullet t_m 0^\omega$ or into $1\bullet (t_m-1) ({\rm Per})^{\ell} t_1\cdots t_m$
for some $\ell \in \N$, i.e., the string  ${\rm Per}$ starts at the positions $2+mi$. Since
${\rm Per}$ is not a power of a single letter, no such local function $\Phi$ can exist.
\end{proof}

\bigskip

We will illustrate that the lower bound on the cardinality of the alphabet in Theorem~\ref{Simple}
is sharp, i.e. can be attained, in quadratic cases. In order to do so, we exploit the positive root
 of the equation $X^2=aX+b$. We first assume that $a \ge b+2$ and $b \ge 2$.

\begin{proposition}\label{SimpleBound}
Let $d_\beta(1) = ab$, where $a \ge b+2$ and $b \ge 2$, be the R\'enyi expansion of~$1$ in base~$\beta$.
Then $1$-block parallel addition in base~$\beta$ is possible on alphabet $\A=\{0, \ldots, a+b\}$.
\end{proposition}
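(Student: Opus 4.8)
The plan is to exhibit an explicit $1$-block parallel algorithm, i.e. a $p$-local digit set conversion in base $\beta$ from the alphabet $\A+\A = \{0,1,\ldots,2(a+b)\}$ to $\A = \{0,1,\ldots,a+b\}$. Since $d_\beta(1)=ab$, the base $\beta$ is the positive root of $X^2 = aX + b$, so the fundamental identity $a\beta^{j} + b\beta^{j-1} = \beta^{j-1}(a\beta + b) = \beta^{j+1}$ holds for every $j$. This identity is the only value-preserving tool available, and the whole construction is built on it. Note that Theorem~\ref{Simple} (with $t_1=a$, $t_m=b$) already gives the matching lower bound $M\ge a+b$, so the alphabet $\A$ is the smallest possible; the task here is to prove attainability.

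First I would set up the carry formalism. Writing the digitwise sum as $s=\sum_j s_j\beta^j$ with $s_j\in\{0,\ldots,2(a+b)\}$, I introduce integer carries $(r_j)_j$ and define the output digits by
\[ z_j = s_j - a\,r_j - b\,r_{j+1} + r_{j-1}. \]
By the identity above, $\sum_j z_j\beta^j = \sum_j s_j\beta^j$ for \emph{any} choice of integer carries, so value preservation is automatic and the entire problem reduces to choosing the $r_j$ \emph{locally} so that $0\le z_j\le a+b$. The intuition is that $r_j$ records how many times the relation is used to merge the pair (coefficient $a$ at position~$j$, coefficient $b$ at position~$j-1$) into a single unit at position~$j+1$. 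Since subtracting $2a$ already reduces the maximal sum-digit $2(a+b)$ to $2b\le a+b$, the carries can be kept in $\{0,1,2\}$, with $r_j$ governed by thresholds on $s_j$ of the form ``$r_j = 1$ once $s_j$ exceeds $a+b$, and $r_j = 2$ once $s_j$ is close to $2(a+b)$''.

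The carry $r_j$ would then be defined as a function of a bounded window of consecutive sum-digits (reading $s_j$ together with a few neighbours), and I would verify $0\le z_j\le a+b$ by a finite case analysis over the window. The role of the hypotheses is already visible in the extreme case $s_j=2(a+b)$, which forces $r_j=2$: there $z_j = 2b - b\,r_{j+1} + r_{j-1}$, and the worst sub-case $r_{j+1}=0$, $r_{j-1}=2$ gives $z_j = 2b+2$, which lies in $\A$ \emph{precisely} because $a\ge b+2$ yields $2b+2\le a+b$; the condition $b\ge 2$ likewise secures the remaining boundary inequalities and the non-negativity of the reduced digits. Finiteness and locality are then immediate: a finite input produces only finitely many non-zero carries, and $z_j$ depends on just a few consecutive $s_i$'s, so $\Phi$ is $p$-local with small~$p$.

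The step I expect to be the main obstacle is exactly the coordination of the carries across adjacent positions. Each digit $z_j$ is perturbed by the carries of positions $j-1$, $j$ and $j+1$ simultaneously, with the \emph{large} coefficient $a$ attached to its own carry and the coefficient $b$ attached to the upper neighbour's. Because $b$ may be as large as $a-2$, this neighbour interference cannot be absorbed by a carry rule that inspects a single digit: once the neighbours' carries are allowed to take worst-case values, the safe band for $s_j - a\,r_j$ has width $a-b-1$, which is smaller than the spacing $a$ between the achievable residues $s_j,\,s_j-a,\,s_j-2a$, so for some $s_j$ no single-digit choice succeeds. Consequently the carry decision must read several consecutive digits, so that the choices at neighbouring positions are made consistently; producing such a rule and closing the accompanying finite case analysis — in particular guaranteeing that the constraints imposed by the extremal digits $0$ and $2(a+b)$ are simultaneously satisfiable — is the technical heart of the proof.
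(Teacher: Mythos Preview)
Your carry formalism $z_j = s_j - a\,r_j - b\,r_{j+1} + r_{j-1}$ is exactly the one the paper uses, so the skeleton is right. The substantive difference is in \emph{scope}. You attack the full conversion $\A+\A=\{0,\ldots,2(a+b)\}\to\A$ in one pass, with carries in $\{0,1,2\}$, and you correctly diagnose that this forces a neighbour-sensitive rule whose design and verification you have not actually carried out --- you explicitly defer ``producing such a rule and closing the accompanying finite case analysis'' as the technical heart. As it stands, then, the proposal is a plan rather than a proof: the carry rule is never specified, and without it nothing has been established.

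The paper avoids precisely this difficulty by invoking Proposition~18 of~\cite{FrPeSv2}: to obtain parallel addition on $\A$ it suffices to exhibit a parallel \emph{greatest digit elimination}, namely a local conversion from $\{0,\ldots,a+b+1\}$ to $\{0,\ldots,a+b\}$. With only one surplus digit to eliminate, carries stay in $\{-1,0,1\}$, the rule for $q_j$ reads only the triple $(z_{j+1},z_j,z_{j-1})$, and the case analysis is short and mechanical (a dozen cases, each a one-line inequality check using $a\ge b+2$ and $b\ge 2$). Your observation that the ``safe band'' has width $a-b-1<a$ when the neighbours' carries are uncontrolled is exactly why the direct $\{0,1,2\}$-carry scheme is awkward; the GDE reduction makes the band issue disappear because the input range shrinks from $2(a+b)+1$ to $a+b+2$. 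So the missing idea is this reduction step: adopt it, and the explicit rule and its verification become routine.
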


By Proposition~18 in~\cite{FrPeSv2}, it is enough to show that the greatest digit elimination
from $\{0,\ldots, a+b+1\}$ to $\{0, \ldots, a+b\} = \A$ can be done in parallel:
\vskip0.2cm \hrule \vskip0.2cm

\noindent {\bf Algorithm~\textsl{GDE}($\beta^2=a\beta+b$)}: Base $\beta > 1$ satisfying
$\beta^2=a\beta+b$, $a \ge b+2$, $b \ge 2$, parallel conversion (greatest digit elimination)
 from $\{ 0,\ldots, a+b+1\}$ to $\{0, \ldots, a+b\} = \A$.

\vskip0.2cm \hrule \vskip0.2cm

\noindent{\sl Input}: a finite sequence of digits $(z_j)$ of  $\{0,\ldots, a+b+1 \}$, with $ z= \sum z_j\beta^j$.\\
{\sl Output}: a finite sequence of digits $(x_j)$ of $\{0,\ldots,
 a+b\}$, with $z = \sum x_j\beta^j$.

\vskip0.2cm

\noindent\texttt{for each $j$ in parallel do}\\

1. \hspace*{0.3cm} \texttt{case}
    $\left\{\begin{array}{l}
        z_j = a+b+1\  \\
        z_j = a+b  \hbox{ \texttt{and}} \ \bigl( z_{j+1} \leq  b-1 \ \hbox{ \texttt{or}}\ z_{j-1}\geq a \Bigr) \ \\
        a+1 \le z_j \le a+b-1\  \hbox{ \texttt{and}} \  z_{j+1} \leq  b-1  \ \\
        z_j = a  \hbox{ \texttt{and}} \  z_{j+1} \leq  b-1 \ \hbox{ \texttt{and}}\ z_{j-1}\geq a  \ \\
    \end{array} \right\} $
\texttt{then} $q_j:=1$\\[1mm]
\hspace*{1.5cm} \texttt{if} $z_j  \le b-1 \hbox{ \texttt{and}} \  z_{j+1} \geq  a$ \ \texttt{then} $q_j:=-1$\\[1mm]
\hspace*{1.5cm} \texttt{else} $q_j:=0$\\

2. \hspace*{0.3cm} $x_j := z_j - a q_j - b q_{j+1} + q_{j-1}$

\vskip0.2cm \hrule \vskip0.2cm

\begin{proof}
Let us denote $w_j := z_j- a q_j$, and inspect all the possible combinations of $(z_{j+1}, z_j, z_{j-1})$ that can occur:
\begin{itemize}
    \item $z_j=a+b+1$: Then $w_j=b+1$, and $q_{j+1}, q_{j-1} \in \{-1,0,1\}$, thus $0 \le x_j \le 2b+2 \le a+b$, since $a \ge b+2$.

    \item $z_j=a+b$ and $z_{j+1} \leq b-1$: Then $w_j=b$, $q_{j+1} \in \{-1,0\}$, and $q_{j-1} \in \{-1,0,1\}$, thus $0 \le b-1 \le x_j \le 2b+1 < a+b$, since $a \ge b+2$.

    \item $z_j=a+b$ and $z_{j-1} \geq a$: Then $w_j=b$, $q_{j-1} \in \{0,1\}$, and $q_{j+1} \in \{-1,0,1\}$, thus $0 \le x_j \le  2b+1 \le a+b$, since $a \ge b+2$.

    \item $z_j=a+b$ and $z_{j+1} \ge b$ and $z_{j-1} \le a-1$: Then $w_j=a+b$, $q_{j+1} \in \{0,1\}$, and $q_{j-1} \in \{-1,0\}$, so $0 < a-1 \le x_j \le  a+b$.

    \item $a+1 \le z_j \le a+b-1$ and $z_{j+1} \leq b-1$: Then $1\le w_j \le b-1$, $q_{j+1} \in \{-1,0\}$, and $q_{j-1} \in \{-1,0,1\}$, thus $0\le x_j \le  2b < a+b$.

    \item $a+1 \le z_j \le a+b-1$ and $z_{j+1} \ge b$: Then $a+1 \le w_j=z_j \le a+b-1$, $ q_{j+1} \in \{0,1\}$, and $q_{j-1} \in \{-1,0,1\}$, so $0 < a-b \le x_j \le a+b$.

    \item $z_j=a$ and $z_{j+1} \leq b-1$ and $z_{j-1}\geq a$: Then $w_j=0$, $q_{j+1} \in \{-1,0\}$, and $q_{j-1} \in \{0,1\}$, thus $0 \le x_j \le b+1 < a+b$.

    \item $z_j=a$ and $z_{j+1} \ge b$: Then $w_j=z_j=a$, $q_{j+1} \in \{0,1\}$, and $q_{j-1} \in \{-1,0,1\}$, thus $0 < a-b-1 \le x_j \le a+1 \le a+b$.

    \item $z_j=a$ and $z_{j-1} \le a-1$: Then $w_j=z_j=a$, $q_{j-1} \in \{-1,0\}$, and $q_{j+1} \in \{-1,0,1\}$, thus $0 < a-b-1 \le x_j \le a+b$.

    \item $b \le z_j \le a-1$: Then $b \le w_j=z_j \le a-1$, $q_{j+1} \in \{-1,0,1\}$, and $q_{j-1} \in \{0,1\}$, thus $0 \le x_j \le a+b$.

    \item $z_j \le b-1$ and $z_{j+1} \geq a$: Then $a \le w_j \le a+b-1$, and $q_{j+1}, q_{j-1} \in \{0,1\}$, thus $0 < a-b \le x_j \le a+b$.

    \item $z_j \le b-1$ and $z_{j+1} \leq a-1$: Then $0 \le w_j=z_j \le b-1$, $q_{j+1} \in \{-1,0\}$, and $q_{j-1} \in \{0,1\}$, so $0 \le x_j \le 2b < a+b$.
\end{itemize}
It is also obvious that a string of zeroes cannot be converted by the local function in this
algorithm into a string of non-zeroes, therefore, the algorithm performs a correct digit set conversion.
\end{proof}

The previous algorithm acts on alphabet $\mathcal{{A}}\subset \N$. Looking for the letters
 $h \in \A =\{0, \ldots, a+b\}$ such that the algorithm keeps unchanged the constant
 sequences $(h)_{j \in \Z}$ allows us to modify the alphabet of the algorithm:

\begin{definition}\label{fixed}
Let $\A$ and $\B$ be two alphabets containing~$0$ such that $\A \cup \B \subset \Z[\beta]$.
 Let $\varphi: \A^\Z \rightarrow \B^\Z$ be a $p$-local function
realized by the function $\Phi: \A^p \rightarrow \B$. The \emph{letter $h$} in $\A$ is
said to be \emph{fixed by $\varphi$} if $\varphi((h)_{j \in \Z}) = (h)_{j \in \Z} $, or, equivalently, $\Phi(h^p)=h$.
\end{definition}

\begin{proposition}
Let $\beta > 1$ satisfy $\beta^2 = a\beta +b$, with $a \ge b+2$, $b \ge 2$. Parallel addition in
base~$\beta$ is possible on any alphabet of cardinality $a+b+1$ of contiguous integers containing~$0$.
\end{proposition}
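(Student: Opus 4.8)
The plan is to realize every admissible alphabet as a translate of the one handled in Proposition~\ref{SimpleBound} and to transport the algorithm \textsl{GDE} onto it by recentering. An alphabet of contiguous integers of cardinality $a+b+1$ containing~$0$ is exactly $\A_s = \{-s, -s+1, \ldots, a+b-s\}$ for some $s \in \{0,1,\ldots,a+b\}$. For $s=0$ we already have parallel addition on $\A_0 = \{0,\ldots,a+b\}$ by Proposition~\ref{SimpleBound}, so it remains to move to the shifted alphabets. The tool is the notion of fixed letter from Definition~\ref{fixed}: I would first check, by feeding the constant sequence $(h)_{j\in\Z}$ into \textsl{GDE} (so that $z_{j+1}=z_j=z_{j-1}=h$), that the algorithm produces $q_j=0$ and hence leaves the sequence unchanged precisely when $h \le a+b-1$, whereas $h=a+b$ yields $q_j=1$ and is not fixed. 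Thus the letters fixed by \textsl{GDE} are exactly $\{0,1,\ldots,a+b-1\}$.

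Next I would recenter the algorithm at a fixed letter $s \in \{0,\ldots,a+b-1\}$. Run the same carry assignment as in \textsl{GDE}, but with every threshold $\theta$ appearing in Step~1 replaced by $\theta - s$; keep the output rule $x_j := z_j - a\,q_j - b\,q_{j+1} + q_{j-1}$ of Step~2 unchanged. Writing $\hat z_j = z_j + s$, the carries of the recentered algorithm on input $(z_j)$ over $\{-s,\ldots,a+b+1-s\}$ coincide with the carries of \textsl{GDE} on $(\hat z_j)$ over $\{0,\ldots,a+b+1\}$, so $x_j$ equals the \textsl{GDE}-output minus $s$ and therefore lands in $\A_s$. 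Two points must be verified. Value preservation is automatic and independent of $s$: the correction satisfies $\sum_j (a q_j + b q_{j+1} - q_{j-1})\beta^j = \sum_j q_j\beta^{j-1}(a\beta + b - \beta^2) = 0$ by $\beta^2 = a\beta+b$. Preservation of finiteness is exactly where the fixed-letter property enters: on the constant background where $z_j=0$ one has $\hat z_j = s$, which is fixed by \textsl{GDE}, so the carries vanish there and the output background stays $0$. This yields a parallel greatest digit elimination from $\{-s,\ldots,a+b+1-s\}$ to $\A_s$, for every $s \le a+b-1$.

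Finally I would assemble full addition. Applying the mirror symmetry $z \mapsto -z$ turns the greatest digit elimination on $\A_{a+b-s} = -\A_s$ into a least digit elimination on $\A_s$; since $a+b-s \in \{1,\ldots,a+b-1\}$ is again a fixed letter whenever $1 \le s \le a+b-1$, both the top and the bottom eliminations are available on $\A_s$. Combining them and iterating to reduce $\A_s+\A_s$ to $\A_s$ --- the two-sided analogue of Proposition~18 of \cite{FrPeSv2} used for $\A_0$ --- gives parallel addition on $\A_s$ for all $1 \le s \le a+b-1$; the case $s=0$ is Proposition~\ref{SimpleBound}, and the extreme case $s=a+b$, i.e.\ the non-positive alphabet $\{-(a+b),\ldots,0\} = -\A_0$, follows from $\A_0$ by the same mirror symmetry. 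The main obstacle, and the crux of the argument, is the preservation of finiteness under the translation: this is what restricts the direct recentering to fixed letters $s \le a+b-1$ and forces the extreme translate to be obtained separately by symmetry, and it is also the reason the transport must be done on the carry-based elimination \textsl{GDE} (whose carries have finite support) rather than on the addition conversion itself.
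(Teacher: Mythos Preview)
Your proof is correct and follows essentially the same approach as the paper: identify that the letters $0,\ldots,a+b-1$ are fixed by the algorithm \textsl{GDE}, then transport parallel addition to the translated alphabets $\A_s$. The paper simply invokes Corollary~24 of~\cite{FrPeSv2} for the transport step (which packages precisely your recentering and mirror-symmetry argument), whereas you spell the mechanism out explicitly.
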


\begin{proof}
Every letter $h$, $0 \le h \le a+b-1$, is fixed by the Algorithm~\textsl{GDE}($\beta^2=a\beta+b$)
above. So, for any $d = 1, \ldots, a+b-1$, both letters $d$ and $a+b-d$ are fixed by the algorithm,
and, by Corollary~24 in~\cite{FrPeSv2}, parallel addition is possible on any alphabet of
the form $\{-d, \ldots, a+b-d\} = \A$, with $d \in \{0, \ldots, a+b\}$.
\end{proof}

\begin{remark}
The  algorithm used in the proof of Proposition~\ref{SimpleBound} requires the coefficient $b$ in
the quadratic polynomial $X^2-aX-b$ to satisfy $a \ge b+2$, $b \ge 2$, but we have results also for other cases:
\begin{itemize}
    \item The case $b=1$ for $a \ge b$ is studied in \cite{FrPeSv2}, where we gave an algorithm
    for $1$-block parallel addition on the alphabet $\A = \{0,1,\ldots, a+1\}$, i.e., the bound of Theorem~\ref{Simple} is attained here, too.

    \item Also in the case of $b \ge 2$ with $a=b+1$, the lower bound from Theorem~\ref{Simple}
    on the cardinality of $\A$ is attained; moreover, with $k=1$. We can perform $1$-block parallel addition by the refined Algorithm~\textsl{GDE}($\beta^2=a\beta+a-1$) described below.

    \item For $b \ge 2$ and $a=b$, the lower bound on the cardinality of the alphabet~$\A$
     from Theorem~\ref{Simple} is attained as well. It follows from  Corollary~\ref{ConclusionSimple},
      where the existence of $k$-block parallel addition for this case is guaranteed. Besides, it is
      assumed that also here the parallel addition on alphabet of the minimal cardinality $\# \A = 2a+1$
      should be possible with $k=1$, i.e. $1$-block parallel, but the algorithm is a lot more
      complicated than for the case of $a=b+1$, and it still remains as an open task.
\end{itemize}
\end{remark}

\vskip0.2cm \hrule \vskip0.2cm

\noindent {\bf Algorithm~\textsl{GDE}($\beta^2=a\beta+a-1$)}: Base $\beta > 1$
satisfying $\beta^2=a\beta+a-1$, $a \ge 3$, parallel conversion (greatest digit elimination) from $\{ 0,\ldots, 2a\}$ to $\{0, \ldots, 2a-1\} = \A$.

\vskip0.2cm \hrule \vskip0.2cm

\noindent{\sl Input}: a finite sequence of digits $(z_j)$ from $\{0, \ldots, 2a\}$, with $z = \sum_j z_j\beta^j$.\\
{\sl Output}: a finite sequence of digits $(x_j)$ from $\{0, \ldots, 2a-1\}$, with $ z = \sum_j x_j \beta^{j}$.

\vskip0.2cm

\noindent\texttt{for each $j$ in parallel do}\\

1. \hspace*{0.3cm} \texttt{case}
    $\left\{\begin{array}{l}
        z_j = 2a\ \hbox{ \texttt{and}} \ z_{j+1} \le 2a-1 \\
        z_j = 2a\ \hbox{ \texttt{and}} \ z_{j+1} = 2a \hbox{ \texttt{and}} \ a \le z_{j+2} \\
        z_j = 2a-1\ \hbox{ \texttt{and}} \ z_{j+1}\le a-1 \\
        z_j = 2a-1\ \hbox{ \texttt{and}} \ a \le z_{j+1} \le 2a-1 \hbox{ \texttt{and}} \ a \le z_{j-1}  \\
        z_j = 2a-1\ \hbox{ \texttt{and}} \ z_{j+1} = 2a \hbox{ \texttt{and}} \ a \le z_{j+2}  \hbox{ \texttt{and}} \ a \le z_{j-1}  \\
        a+1 \le z_j \le 2a-2\ \hbox{ \texttt{and}} \ z_{j+1} \le a-1 \\
        z_j = a\ \hbox{ \texttt{and}} \ z_{j+1}  \le a-1  \hbox{ \texttt{and}} \ a \le z_{j-1}  \\
    \end{array} \right\} $  \texttt{then} $q_j := 1$\\[1mm]
\hspace*{1.5cm} \texttt{if} $z_j \le  a-2 \ \hbox{ \texttt{and}} \ a \le z_{j+1} $ \texttt{then} $q_j := -1$\\[1mm]
\hspace*{1.5cm} \texttt{else} $q_j:=0$\\

2. \hspace*{0.3cm} $x_j := z_j - a q_j  - (a-1) q_{j+1} + q_{j-1}$

\vskip0.2cm \hrule \vskip0.2cm

We present the Algorithm~\textsl{GDE}($\beta^2=a\beta+a-1$) without proving its correctness
in detail, as it is rather tedious. It can be proved by inspecting all the possible combinations
 of digits $(z_{j+1},z_j,z_{j-1})$, similarly as done above for the Algorithm~\textsl{GDE}($\beta^2=a\beta+b$).

\bigskip

Let us now consider a class of well studied Pisot numbers, generalizing the (quadratic) golden mean:

\begin{definition}  Let  $d \in \N, d \ge 2$. The real root $\beta >1$  of the
 equation $X^d = X^{d-1} + X^{d-2} + \cdots + X + 1$ is said to be the {\em $d$-bonacci number}.
  Specifically, the $2$-bonacci number (the golden mean) is called the {\em Fibonacci number}, and the $3$-bonacci number is called the {\em Tribonacci number}.
\end{definition}

As a corollary of Theorem~\ref{Simple}, we get the following result:

\begin{corollary}\label{Dbonacci}
Let $\beta$ be the $d$-bonacci number, $d \geq 2$. There exists no $k$-block $p$-local
function performing parallel addition in base~$\beta$ on the alphabet $\A = \{0,1\}$.
\end{corollary}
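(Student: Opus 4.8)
The plan is to obtain this as a direct application of Theorem~\ref{Simple}, so essentially all the work is to identify the R\'enyi expansion of unity for the $d$-bonacci base. First I would divide the defining equation $\beta^d = \beta^{d-1} + \beta^{d-2} + \cdots + \beta + 1$ by $\beta^d$, which yields $1 = \beta^{-1} + \beta^{-2} + \cdots + \beta^{-d}$. This exhibits a finite $(\beta,\{0,1\})$-representation $1 = \bullet\, 1^d$ (the string of $d$ ones) and strongly suggests $d_\beta(1) = 1^d = t_1 t_2 \cdots t_d$ with $m = d$ and $t_i = 1$ for every $i$.

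Next I would confirm that $1^d$ is genuinely the \emph{greedy} expansion of unity, not merely one among several representations. Since the $d$-bonacci number lies in the open interval $(1,2)$ (the root increases from the golden mean toward $2$ as $d$ grows, and never reaches an integer), it is non-integer and its canonical alphabet is $\C_\beta = \{0,1\}$. I would then either run the greedy algorithm \eqref{Renyi_expansion} on $1$ and check that it outputs exactly $d$ ones before halting, or invoke Parry's admissibility characterization recalled in Section~\ref{Renyi}: every proper suffix $1^{d-k}$ with $1 \le k < d$ is lexicographically dominated by the full string, so $1^d$ is admissible as the expansion of unity. Either route establishes $d_\beta(1) = t_1 \cdots t_d = 1^d$.

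With this in hand, the hypotheses of Theorem~\ref{Simple} hold at once: $m = d \ge 2$, the base $\beta$ is non-integer, and $1 \le t_m = 1 \le t_i = 1$ for all $i$. The theorem therefore forces any non-negative alphabet $\A = \{0,1,\ldots,M\}$ admitting block parallel addition to satisfy $M \ge t_1 + t_m = 2$. As $\{0,1\}$ corresponds to $M = 1 < 2$, no $k$-block $p$-local function can perform parallel addition on it, which is exactly the assertion of the corollary.

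The only genuinely nontrivial step is the verification that $d_\beta(1) = 1^d$, rather than a longer or infinite expansion; one must be slightly careful here, since a finite representation of $1$ need not be the greedy one. I expect this to be routine via the admissibility condition of Section~\ref{Renyi}, and it is the sole point where an argument, beyond a bare invocation of Theorem~\ref{Simple}, is required.
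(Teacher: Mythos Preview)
Your proposal is correct and follows exactly the paper's approach: the corollary is stated in the paper as an immediate consequence of Theorem~\ref{Simple}, using only that $d_\beta(1)=1^d$ for the $d$-bonacci base (a fact the paper quotes without proof later, in the proof of Corollary~\ref{DBonacci}). Your extra care in verifying that $1^d$ is indeed the greedy expansion of unity is appropriate and more detailed than what the paper provides.
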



\subsubsection{Non-simple Parry numbers}\label{Parry_non-simple}

\begin{thm}\label{nonSimple} Let $d_\beta(1) = t_1t_2\cdots t_m(t_{m+1}t_{m+2} \cdots t_{m+p})^\omega$
 be the R\'enyi expansion of~$1$ in base~$\beta$. Let the coefficients $t_1,\ldots, t_{m+p}$ satisfy one of the following assumptions:
\begin{enumerate}
    \item $m=p=1$;
    \item $m=1$, $p \ge 2$, and $t_1 > t_2 > t_j$ for all~$j$ such that $2 < j \leq p+1$;
    \item $m \geq 2$ and $t_1 > t_2 \geq t_j$ for all~$j$ such that $2 \leq j \leq m$ and $t_2 > t_j$ for all~$j$ such that $m+1 \leq j \leq m+p$.
\end{enumerate}
If block parallel addition in base~$\beta$ can be performed on alphabet $\A = \{0, 1, \ldots, M\}$, then $M \geq 2t_1-t_2-1$.
\end{thm}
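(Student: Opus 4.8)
The plan is to mimic the architecture of the proof of Theorem~\ref{Simple}, arguing by contradiction. I would assume that some $k$-block $P$-local function $\Phi$ performs parallel addition on $\A = \{0,1,\ldots,M\}$ with $M = 2t_1 - t_2 - 2$ (one below the claimed bound), using the letter $P$ for the locality parameter to avoid a clash with the period length $p$. From this $\Phi$ I would extract a positional rigidity incompatible with the eventual periodicity of $d_\beta(1)$.

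First I would record the representations of $1$ supplied by the non-simple R\'enyi expansion. The greedy one is the infinite word $\bullet t_1 t_2 \cdots t_m(t_{m+1}\cdots t_{m+p})^\omega$, and beside it I would write the complementary representation of the maximal fractional value, namely $\bullet M^\omega = 1\bullet(M - t_1)(M-t_2)\cdots$, read off digitwise from the identity $\frac{M}{\beta-1} - 1 = \sum_{j\ge1}(M - t_j)\beta^{-j}$. The choice $M = 2t_1 - t_2 - 2$ is exactly what makes this complement admissible and produces the value inequality $0\bullet M^\omega < 1\bullet(\text{a suitable tail of } d_\beta(1))$, the analogue of \eqref{boundMomega}. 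The hypotheses ($t_1 > t_2$, together with $t_2$ dominating the remaining digits in each of the three cases) are precisely what guarantee $M - t_j \ge 0$ for $j\ge 2$ and control this comparison.

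With that separation in hand I would prove the analogue of \textbf{Statement~0}: every finite representation of $1$ on $\A$ belongs to an explicitly listed family obtained by truncating the eventually periodic expansion at period boundaries. This is where Lemma~\ref{coincidence} does the work, since the value inequality $0\bullet M^\omega < 1\bullet s_is_{i+1}\cdots$ at each position $i$ forces every alternative representation of $1$ to agree with the greedy one on long prefixes, so the block $\tau = t_{m+1}\cdots t_{m+p}$ can begin only at the positions dictated by $d_\beta(1)$. I would then build two numbers $z$ and $y$ in the spirit of \eqref{z_y}: a number $z$, a long initial segment of $d_\beta(1)$, all of whose representations coincide on a prefix carrying $n$ copies of $\tau$, so that $\Phi$ must reproduce $\tau$ at a single fixed phase; and a number $y$, opened by the over-the-alphabet digit $M+1$ which forces a carry into the integer part, whose only admissible finite representations place the very same $\tau$-pattern one position later. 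Since $\Phi$ is local with a bounded window, it cannot realign one genuinely periodic, non-constant block at two different phases at once, and this is the contradiction.

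The hard part, and the reason the statement splits into three cases, is guaranteeing that the tracked repeating word is \emph{not} a power of a single letter, which is exactly the non-constancy invoked at the end of the simple case. When the bare period $\tau$ is a single letter (the case $m=p=1$, and any instance with constant $\tau$), the phase argument on $\tau$ alone collapses, so I would instead track a longer repeating unit spanning the junction between preperiod and period, whose non-constancy is forced by $t_1 > t_2$ and by the strict descents in hypotheses (2) and (3); engineering these three hypotheses so that a non-constant unit is always available is the crux of the whole argument. Two secondary technical obstacles remain: the boundary digit $M - t_1 = t_1 - t_2 - 2$, which equals $-1$ when $t_1 = t_2+1$ and then requires a carry adjustment in the complement representation, and the careful verification, via the Parry admissibility condition, that the family listed in the Statement~0 analogue is genuinely exhaustive.
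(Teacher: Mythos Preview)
Your overall architecture --- assume $M = 2t_1 - t_2 - 2$, use Lemma~\ref{coincidence} to pin down prefixes, and derive inconsistent requirements on $\Phi$ from two test strings --- matches the paper. But the paper's execution differs from yours in one decisive respect, and that difference dissolves exactly the obstacles you flag at the end.

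The paper does \emph{not} track the period $\tau = t_{m+1}\cdots t_{m+p}$ and does not argue via a phase shift. Instead it tracks the \emph{constant} string $(t_1-1)^N$. The two test strings are
\[
y = \bullet(t_1-1)^{n+1} \qquad\text{and}\qquad z = \bullet(M+1)(t_1-1)^{pn+m-1}P',
\]
where $P'$ is a small perturbation of $\tau^\ell$. The string $y$ is already over $\A$, and Lemma~\ref{coincidence} together with the inequality $1\bullet(t_1-1) > 0\bullet M^\omega$ forces every $\A$-representation of $y$ to begin $\bullet(t_1-1)^n$. For $z$, the greedy expansion is computed explicitly by subtracting a copy of $d_\beta(1)$: it equals $1\bullet(t_1-1-t_2)\cdots(t_1-1-t_m)\,H^n\,(t_1-1)^{p\ell-m}t_1\cdots t_m$, where $H = (t_1-1-t_{m+1})\cdots(t_1-1-t_{m+p})$, and Lemma~\ref{coincidence} again forces every $\A$-representation to agree with this on the prefix through $H^n$. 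The contradiction is then a \emph{value} mismatch, not a phase mismatch: deep inside both $y$ and $z$ the input to $\Phi$ is the constant block $(t_1-1)^{kP}$, but from $y$ the output must be $(t_1-1)^k$, while from $z$ it must be a block built from $H$; and $H \neq (t_1-1)^p$ because the period of $d_\beta(1)$ cannot be identically zero.

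This explains what happens to your anticipated difficulties. There is no Statement~0 analogue at all: for a non-simple Parry number, classifying the finite $\A$-representations of~$1$ is indeed awkward, and the paper never needs it. The possible constancy of $\tau$ is irrelevant, since the argument does not track $\tau$. The digit $M - t_1$ never appears, because the paper proves $1\bullet(t_1-1) > 0\bullet M^\omega$ directly by subtracting shifted copies of $\overline{2}(2t_1)$, rather than writing out the complement representation of $\bullet M^\omega$. Finally, the three hypotheses on the $t_j$'s are used, but not to manufacture a non-constant tracked unit: they enter only in verifying a lexicographic inequality $g_1g_2g_3\cdots \prec_{lex} g_{i+1}g_{i+2}\cdots$ that is needed for the prefix-rigidity of $z$. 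Your route might be salvageable, but the ``longer repeating unit spanning the junction'' you would need when $m=p=1$ is not obviously available, whereas the paper's device of tracking $(t_1-1)$ against its complement $H$ works uniformly across all three cases.
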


\begin{proof}
Note that, due to the fact that the R\'enyi expansion can never take the form $t_1^\omega$,
the assumptions imply that $t_1 > t_j$ for $j = 2, ..., m+p$ in all the three cases.

Let us first prove the inequality
\begin{equation}\label{ProofNonsimple1}
1 \bullet (t_1-1) > 0 \bullet (2t_1-t_2-2)^\omega.
\end{equation}
If $a$ is a digit, we use the notation $\overline{a} = -a$. The R\'enyi expansion of unity gives
the representation of number~$1$ in the form $1\bullet = \bullet t_1 t_2 t_3 \cdots $; consequently,
 we have a representation of zero in the form $0 = \overline{1} \bullet t_1 t_2 t_3 \cdots $.
 We will add to the left side of \eqref{ProofNonsimple1} the value $\overline{1}\bullet t_1$
 (which is negative, since $\overline{1}\bullet t_1 < \overline{1} \bullet t_1 t_2 t_3 \cdots = 0$)
 and infinitely many negative values $\bullet \overline{2} (2t_1)$, $\bullet 0\overline{2} (2t_1)$, $\bullet 00\overline{2} (2t_1)$, $\bullet 000\overline{2} (2t_1)$, etc. We obtain
$$ 1\bullet (t_1-1) > 0\bullet  (2t_1-3)(2t_1 -2)^\omega\,.$$
By assumption, we have $t_2 \geq 1$, and thus $0\bullet (2t_1-3)(2t_1-2)^\omega > 0\bullet (2t_1-t_2-2)^\omega$.
This proves the inequality \eqref{ProofNonsimple1}.

Now to prove the theorem by contradiction, let us put  $M= 2t_1-t_2-2$ and suppose that
conversion from $\{0, 1, \ldots M+1\}$ into $\{0, 1, \ldots, M\} = \A$ is possible in
parallel by a $k$-block $p$-local function $\Phi$. Let us denote the periodic part of
the R\'enyi expansion $d_\beta(1)$ by ${\rm Per} = t_{m+1}t_{m+2}\cdots t_{m+p}$. Find an
integer $\ell$ such that $p\ell > m$, and denote the digits of ${\rm Per}^\ell$ by
${\rm Per}^\ell=p_1p_2\cdots p_{p\ell}$. The string $P' = p'_1p'_2\cdots p'_{p\ell} $ is
then defined as just a small modification of ${\rm Per}^\ell$, namely as
$$ p'_j = \left\{ \begin{array}{ll}
            p_j+t_1-1 & \hbox{for\ } j = 1, \ldots, p\ell-m-1 \,,\\
            p_j+t_1 & \hbox{for\ } j = p\ell-m \,,\\
            p_j & \hbox{for\ } j = p\ell-m+1, \ldots, p\ell \,.
        \end{array}\right. $$

For a chosen integer $n\in\N$, we select two different strings $z = \bullet (M+1)(t_1-1)^{pn+m-1}{P'}$
and $y = \bullet (t_1-1)^{n+1}$, and convert them in various ways into $(\beta, \A)$-representations.

\smallskip

\noindent{\bf A)} \quad The string $z = \bullet (M+1)(t_1-1)^{pn+m-1}{P'}$ shall be converted as follows:

\smallskip

We easily find another representation of $z$, namely its $(\beta, \A)$-representation,
by adding suitable representations of~$0$ to the original string:
$$
\begin{array}{ccrccccc}
z & = & 0 \bullet & (M+1) &(t_1-1)^{m-1} & (t_1-1)^{pn} & P' & \\
0 & = & 1 \bullet & \overline{t}_1 & \overline{t}_2 \ldots \overline{t}_m & \overline{{\rm Per}}^n & \overline{{\rm Per}}^{\ell} & \overline{{\rm Per}}^{\,\omega} \\
0 & = & 0 \bullet & 0 & 0^{m-1} & 0^n & 0^{p\ell-m-1} \overline{1} t_1\ldots t_m & {\rm Per}^\omega \\
\hline
z & = & 1 \bullet & (M\!\!+\!\!1\!\!-\!\!t_1) & (t_1\!\!-\!\!1\!\!-\!\!t_2) \cdots (t_1\!\!-\!\!1\!\!-\!\!t_m) & H^n & (t_1\!\!-\!\!1)^{p\ell-m}\ t_1 \ldots t_m & 0^\omega
\end{array}\,,
$$
where $H=(t_1-1-t_{m+1}) \cdots (t_1-1-t_{m+p})$. In the last row of the table above,
we have expressed $z$ on the alphabet $\A = \{0, 1, \ldots M\}$. Let us denote this representation by
\begin{equation}\label{ProofNonsimple2}
z = g_0\bullet g_1 g_2 \cdots g_{p\ell+pn+m} \in {\rm Fin}_{\A}(\beta)\,.
\end{equation}
As $M=2t_1-t_2-2$, any $g_i < t_1$ and thus the representation in \eqref{ProofNonsimple2}
 satisfies the Parry condition, i.e., the representation $z = g_0 \bullet g_1 g_2 \cdots g_{p\ell+pn+m}$ is greedy.

Let us now show an auxiliary statement:

\smallskip

\noindent\textbf{Statement 1}:\quad {\it Any representation of $z$ in base~$\beta$ on
alphabet~$\A$ other than \eqref{ProofNonsimple2} begins with the prefix $g_0\bullet g_1\cdots g_{pn+m}$.}

\begin{proof}
Since any other representation of $z$ must be lexicographically smaller than the greedy
representation $g_0\bullet g_1 g_2 \cdots g_{p\ell+pn+m}$, according to Lemma~\ref{coincidence} it is enough to show that
\begin{equation}\label{ProofNonsimple3}
0 \bullet M^\omega < 1\bullet g_{i+1}g_{i+2}g_{i+3}\cdots \quad \hbox{for}\ \ i=0,1,\ldots, pn+m\,.
\end{equation}
In particular, for $i=0$ we have to check that $\bullet M^\omega < 1\bullet g_1 \cdots g_{p\ell+pn+m} = z$.
 Since $z = \bullet (M+1)(t_1-1)^{pn+m-1}{P'}$, the previous inequality means
 $\bullet M^\omega < \bullet (M+1)(t_1-1)^{pn+m-1}{P'}$, i.e., $\bullet 0 M^\omega < \bullet 1(t_1-1)^{pn+m-1}P'$ or,
 equivalently, $\bullet M^\omega < 1 \bullet (t_1-1)^{pn+m-1} P'$, which is true thanks to the inequality~(\ref{ProofNonsimple1}), since $M = 2t_1-t_2-2$.

In order to demonstrate the inequality \eqref{ProofNonsimple3} for any index $i$ with $1 \leq i \leq pn+m$, we will show that
\begin{equation}\label{ProofNonsimple7}
g_{1}g_{2}g_{3}\cdots \prec_{lex} g_{i+1}g_{i+2}g_{i+3} \dots \quad \hbox{for}\ \ i=1,2,\ldots, pn+m\,.
\end{equation}
As $ g_0\bullet g_1 g_2 \cdots  g_{p\ell+pn+m}$ is the greedy representation, any
 of its suffixes satisfies the Parry condition as well, and thus the lexicographic
 ordering of the representations corresponds to the numerical ordering of the corresponding
 real numbers. Therefore, the inequality~ \eqref{ProofNonsimple7} together with
 validity of~\eqref{ProofNonsimple3} for $i=0$ implies validity of~\eqref{ProofNonsimple3} for all $i=0,1, \ldots, pn+m$.

The inequality~\eqref{ProofNonsimple7} can be equivalently rewritten to
$$(t_1-1-g_1) (t_1-1-g_2) (t_1-1-g_3) \cdots \succ_{lex} (t_1-1-g_{i+1}) (t_1-1-g_{i+2}) (t_1-1-g_{i+3}) \cdots$$
Looking into the last row of the table, it means
$$t_2 t_2 t_3 \cdots ({\rm Per})^n 0^{p\ell-m} \overline{1} (t_1-1-t_2) \cdots (t_1-1-t_m) \succ_{lex} \qquad\qquad\qquad$$
$$\qquad\qquad \succ_{lex} t_{i+1} t_{i+2} t_{i+3} \cdots ({\rm Per})^h 0^{p\ell-m} \overline{1} (t_1-1-t_2) \cdots (t_1-1-t_m)$$
for a certain power~$h$ of the period $Per$ in the range of $n \ge h \ge 0$.
The assumptions about the coefficients $t_1, t_2, \ldots, t_{m+p}$ guarantee that the last inequality is fulfilled.
\end{proof}

\noindent{\bf B)} \quad For conversion of the string $y = \bullet (t_1-1)^{n+1}$, we use another auxiliary statement:

\smallskip

\noindent\textbf{Statement 2}:\quad {\it Any other representation of the number
 $y = \bullet (t_1-1)^{n+1}$ in base~$\beta$ on alphabet~$\A$ begins with the prefix $ \bullet (t_1-1)^{n} \,.$}

\begin{proof}
We use the same arguments: since the string $(t_1-1)^{n+1}$ satisfies the
 Parry lexicographical condition, the representation $\bullet (t_1-1)^{n+1}$ is
  the greedy expansion of $y$ and $y<1$. According to Lemma \ref{coincidence}, we have to check that
$$ 0\bullet M^\omega < 1\bullet (t_1-1)^{i} \quad \hbox{for all } 1\leq i\leq n \,.$$
This follows from~\eqref{ProofNonsimple1}.
\end{proof}

Now we can deduce the desired contradiction to the assumption of the existence of
 a $k$-block $p$-local function $\Phi$. Since Statement~2 holds for an arbitrary
  $n\in\N$, necessarily $\Phi((t_1-1)^{kp}) = (t_1-1)^k $. But, according to
   Statement~1, the string $\Phi((t_1-1)^{kp})$ has to be compounded  from blocks $H$ --- a contradiction.
\end{proof}

\bigskip

We illustrate on base~$\beta$, the larger root of the equation $X^2 = aX - b$,
 where $a,b \in \N, a\geq b+2, b\geq 1$ that our bound on the cardinality of
 alphabet in Theorem~\ref{nonSimple} is sharp. The R\'enyi expansion of unity is $d_{\beta}(1) = (a-1)(a-b-1)^\omega$.

We show that the smallest possible alphabet $\A = \{ 0, \ldots, a+b-2\}$ and
the smallest possible size $k=1$ of the block enable parallel addition by a $k$-block local function.

\begin{proposition}
Let $d_{\beta}(1) = (a-1)(a-b-1)^\omega$, where $a \ge b+2$, $b \ge 1$, be
the R\'enyi expansion of~$1$ in base~$\beta$. Parallel addition in base~$\beta$
 is possible on alphabet $\A=\{0, \ldots, a+b-2\}$, namely by means of a $1$-block local function.
\end{proposition}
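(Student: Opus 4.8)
The plan is to follow the template of Proposition~\ref{SimpleBound}. First I would invoke Proposition~18 of~\cite{FrPeSv2}, which reduces parallel addition on $\A = \{0,\ldots,a+b-2\}$ to the existence of a parallel \emph{greatest digit elimination}, i.e.\ a $1$-local digit set conversion in base~$\beta$ from $\{0,1,\ldots,a+b-1\}$ to $\{0,1,\ldots,a+b-2\} = \A$. So the entire proposition comes down to exhibiting an explicit $1$-block local algorithm for this elimination, in the style of the Algorithms~\textsl{GDE} already displayed above.

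Second, since $\beta$ satisfies $\beta^2 - a\beta + b = 0$, I would seek the converted digits in the form $x_j := z_j - a q_j + b q_{j+1} + q_{j-1}$, where each carry $q_j \in \{-1,0,1\}$ is computed locally from a bounded window of input digits around position~$j$. The telescoping identity $\sum_j (x_j - z_j)\beta^j = \sum_j q_j \beta^{j-1}(\beta^2 - a\beta + b) = 0$ shows that \emph{any} choice of carries preserves the represented value; hence only two things remain to be checked, namely that every output digit $x_j$ lands in $\{0,\ldots,a+b-2\}$, and that an all-zero input stays zero (which gives finiteness and so a genuine digit set conversion).

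Third, I would design the carry rule by thresholds, mirroring the simple-case Algorithm~\textsl{GDE}($\beta^2=a\beta+b$) but with the sign of the $b$-term reversed. Concretely, the relation $\beta^2 = a\beta - b$ makes a carry $q_j=1$ subtract $a$ from position~$j$ while \emph{adding} $b$ to the lower-order neighbour $j-1$ and $1$ to the higher neighbour $j+1$. One therefore sets $q_j = 1$ when $z_j$ is large --- and, for borderline values, when the higher digit $z_{j+1}$ is itself large enough to force a carry injecting $+b$ into position~$j$ --- and uses $q_j = -1$ only in the symmetric situation, where $z_j$ is small and a $-b$ contribution arrives from a borrow $q_{j+1} = -1$. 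As in the simple case, the raw digit $z_{j+1}$ (and possibly $z_{j+2}$) serves as a local proxy for the still-undetermined carry $q_{j+1}$, which is exactly what keeps the rule $1$-block local rather than a propagating chain.

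Finally, correctness would be verified exactly as in the proof of Proposition~\ref{SimpleBound}: by running through all combinations of $(z_{j+1}, z_j, z_{j-1})$ together with the carries they induce and confirming $0 \le x_j \le a+b-2$, plus the trivial observation that an all-zero window yields $x_j = 0$. I expect this range verification to be the main obstacle. Unlike the simple case, the large carry now \emph{adds} $b$ to a neighbour rather than subtracting it, so the danger is overflow past $a+b-2$ (and, dually, a run of borrows underflowing past~$0$); the thresholds must be tuned so that a carry-out and a borrow can never feed the same position destructively. Getting these boundary conditions right --- so that the digit $a+b-1$ is always eliminated while nothing escapes~$\A$ --- is the delicate heart of the argument, and the fact that it already succeeds for block size $k=1$ is what makes the bound of Theorem~\ref{nonSimple} sharp in the strongest sense.
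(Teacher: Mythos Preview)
Your plan matches the paper's proof almost exactly: reduce via Proposition~18 of~\cite{FrPeSv2} to a greatest-digit-elimination from $\{0,\ldots,a+b-1\}$ to $\{0,\ldots,a+b-2\}$, write the output as $x_j = z_j - a q_j + b q_{j+1} + q_{j-1}$, and verify by cases. Two refinements are worth flagging before you attempt the case analysis.

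First, the carry set $\{-1,0,1\}$ is larger than needed. Your own description of the $q_j=-1$ case is self-referential (``$q_j=-1$ only when a borrow $q_{j+1}=-1$ already arrives''), so nothing ever triggers a negative carry; the paper's algorithm indeed uses only $q_j\in\{0,1\}$. The sign flip on the $b$-term means that, unlike the simple case, a positive carry at a neighbour now \emph{adds} to position~$j$, so underflow from incoming carries cannot occur and no borrow mechanism is required.

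Second --- and this is the point where a first attempt would likely fail --- the carry rule cannot be one-sided. You describe it as depending on $z_{j+1}$ (and possibly $z_{j+2}$), but the paper's rule is symmetric in $z_{j\pm1}$ and $z_{j\pm2}$: for the borderline value $z_j=a-2$ one sets $q_j=1$ only when \emph{both} neighbours are guaranteed to carry, since $z_j - a q_j = -2$ must be rescued by $b q_{j+1}+q_{j-1}\ge 2$, forcing $q_{j+1}=q_{j-1}=1$. Predicting this locally requires looking two steps in each direction, so the sliding window has width~$5$, not~$3$. Your final verification ``through all combinations of $(z_{j+1},z_j,z_{j-1})$'' would therefore not close.
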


By Proposition~18 in~\cite{FrPeSv2}, it is enough to show that the greatest digit
elimination from $\{0,\ldots, a+b-1\}$ to $\{0, \ldots, a+b-2\} = \A$ can be done in parallel:
\vskip0.2cm \hrule \vskip0.2cm

\noindent {\bf Algorithm~\textsl{GDE}($\beta^2=a\beta-b$)}: Base $\beta > 1$ satisfying
$\beta^2=a\beta-b$, with $a \ge b+2$, $b \ge 1$, parallel conversion (greatest digit elimination) from $\{0,\ldots, a+b-1\}$ to $\{0, \ldots, a+b-2\} = \A$.

\vskip0.2cm \hrule \vskip0.2cm

\noindent{\sl Input}: a finite sequence of digits $(z_j)$ from $\{0, \ldots, a+b-1\}$, with $z = \sum_j z_j\beta^j$.\\
{\sl Output}: a finite sequence of digits $(x_j)$ from $\{0, \ldots, a+b-2\}$, with $ z = \sum_j x_j \beta^{j}$.

\vskip0.2cm

\noindent\texttt{for each $j$ in parallel do}\\

1. \hspace*{0.3cm} \texttt{case}
    $\left\{\begin{array}{l}
        z_j = a+b-1\  \\
        a-1 \le z_j \le a+b-2\ \hbox{ \texttt{and}} \ \bigl( z_{j+1} \geq  a-1 \ \hbox{ \texttt{or}}\ z_{j-1}\geq a -1\Bigr) \ \\
        z_j = a-2 \ \hbox{ \texttt{and}} \ z_{j+1} = a+b-1 \ \hbox{ \texttt{and}} \ z_{j-1} = a+b-1\ \\
        z_j = a-2 \ \hbox{ \texttt{and}} \ z_{j+1} = a+b-1 \ \hbox{ \texttt{and}} \ z_{j-1} \ge a-1 \ \hbox{ \texttt{and}} \ z_{j-2} \geq a-1 \\
        z_j = a-2 \ \hbox{ \texttt{and}} \ z_{j-1} = a+b-1 \ \hbox{ \texttt{and}} \ z_{j+1}\ge a-1 \ \hbox{ \texttt{and}} \ z_{j+2} \geq a-1 \\
        z_j = a-2 \ \hbox{ \texttt{and}} \ z_{j \pm 1} \ge  a-1 \ \hbox{ \texttt{and}} \ z_{j \pm 2} \geq a-1 \ \\
    \end{array} \right\} $\\

\hspace*{1.5cm} \texttt{then} $q_j:=1$\\[1mm]
\hspace*{1.5cm} \texttt{else} $q_j:=0$\\

2. \hspace*{0.3cm} $x_j := z_j - a q_j + b q_{j+1} + q_{j-1}$

\vskip0.2cm \hrule \vskip0.2cm

\begin{proof}
Let us denote  $w_j := z_j- a q_j$; and remind that $q_j \in \{0, 1\}$ for any $j$, and thus $b q_{j+1} + q_{j-1} \in \{0, 1, b, b+1\}$.
\begin{itemize}
    \item If $z_j \in \{ 0,\ldots, a-3\}$, then $x_j = z_j + bq_{j+1} + q_{j-1} \in \{ 0,\ldots, a+b-2\} = \A$.

    \item If $z_j = a+b-1$, then $w_j=b-1$, thus $0 \le x_j \le 2b \le a+b-2 \in \A$, since $a \ge b+2$.

    \item When $a-1 \le z_j \le a+b-2$, and $z_{j - 1} \geq a-1$ or $z_{j + 1} \geq a-1$, then $-1 \le w_j \le b-2$ and $q_{j+1} + q_{j-1} \in \{1,2\}$. Thus $x_j\in \{0, \ldots, 2b-1\} \subset \A$.

    \item When $a-1 \le z_j \le a+b-2$ and both its neighbours $z_{j \pm 1} < a-1$, then $w_j =  z_j$ and $q_{j+1} = q_{j-1} = 0$. Thus $x_j \in \A$.

    \item If $z_j = a-2$ and $q_j =1$, then necessarily $q_{j \pm 1} = 1$. Since $w_j =-2 $, we get $ x_j =b-1\in \A$.

    \item If $z_j = a-2$ and $q_j =0$, then $w_j = a-2$, and $q_{j-1} + q_{j+1} \in \{0, 1\}$.
    Therefore, the resulting $x_j \in \{a-2, a-1, a+b-2\} \subset \A$.
\end{itemize}
Lastly, it is obvious that a string of zeroes is not converted into a string of
non-zeroes by this algorithm, so all the necessary conditions of parallel addition are fulfilled.
\end{proof}

\begin{proposition}
Let $\beta$ satisfy $\beta^2 = a\beta -b$, with $a \ge b+2$, $b \ge 1$. Parallel addition
in base~$\beta$ is possible on any alphabet of cardinality $a+b-1$ of the form $\A = \{-d, \ldots, a+b-2-d\}$ for $b \le d \le a-2$.
\end{proposition}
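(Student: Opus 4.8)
The plan is to follow verbatim the strategy used for the analogous statement about the base satisfying $\beta^2 = a\beta + b$: the previous Proposition already provides parallel addition on the non-negative alphabet $\{0,\ldots,a+b-2\}$ by means of the $1$-block local function $\Phi$ realizing Algorithm~\textsl{GDE}($\beta^2=a\beta-b$), so it suffices to decide which letters of this alphabet are \emph{fixed} in the sense of Definition~\ref{fixed} and then to invoke Corollary~24 of~\cite{FrPeSv2} to translate the alphabet. Concretely, I would label the six clauses producing $q_j=1$ in Step~1 of the algorithm as (a)--(f) from top to bottom, and compute the output of $\Phi$ on a constant input sequence $(z_j)=(h)_{j\in\Z}$.

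For the fixed-letter computation I split the addition alphabet into two ranges. If $0\le h\le a-2$, then on a constant sequence none of the clauses (a)--(f) can fire: (a) requires $z_j=a+b-1$, (b) requires $z_j\ge a-1$, and (c)--(f) all require $z_j=a-2$ \emph{together with} a neighbour equal to $a+b-1$ or at least $a-1$, which a constant value $h\le a-2$ never supplies. Hence $q_j=0$ for every $j$, and Step~2 gives $x_j=z_j=h$, so each such $h$ is fixed. If instead $a-1\le h\le a+b-2$, then clause~(b) does fire, since the neighbours $z_{j\pm1}=h$ satisfy $h\ge a-1$; this forces $q_j=1$, and Step~2 returns $x_j=h-a+b+1$, which is strictly less than $h$ precisely because $a\ge b+2$. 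Thus no letter in this upper range is fixed, and the set of fixed letters is exactly $\{0,1,\ldots,a-2\}$.

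It then remains to perform the alphabet shift. For $b\le d\le a-2$ both $d$ and its complement $(a+b-2)-d$ lie in $\{0,1,\ldots,a-2\}$: the condition $d\le a-2$ keeps $d$ fixed, while $d\ge b$ is equivalent to $(a+b-2)-d\le a-2$ and hence keeps the complementary letter fixed. Corollary~24 of~\cite{FrPeSv2} then transports the parallel addition on $\{0,\ldots,a+b-2\}$ to parallel addition on the translated alphabet $\{-d,\ldots,a+b-2-d\}$, which is exactly the family of alphabets in the statement and has the announced cardinality $a+b-1$.

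The only genuine obstacle is the middle step, and its content is conceptual rather than computational. The key observation is that, on a constant sequence, clause~(b) is triggered for every $h\ge a-1$ and the correction term $-a+b+1$ it introduces is nonzero exactly under the standing hypothesis $a\ge b+2$; this is what simultaneously fixes the low letters $\{0,\ldots,a-2\}$ and unfixes the high ones, thereby pinning the admissible shifts to the interval $b\le d\le a-2$ and matching the bound in the statement. The remaining check---that a constant sequence of value $h\le a-2$ evades all of the clauses (c)--(f) for $z_j=a-2$---is immediate, since each of them demands a neighbour strictly larger than $a-2$.
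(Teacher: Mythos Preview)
Your proof is correct and follows exactly the same approach as the paper: identify the letters fixed by Algorithm~\textsl{GDE}($\beta^2=a\beta-b$), check that for $b\le d\le a-2$ both $d$ and $(a+b-2)-d$ lie among them, and then apply Corollary~24 of~\cite{FrPeSv2}. The paper simply asserts that every $h$ with $0\le h\le a-2$ is fixed, whereas you spell out the case analysis (and also verify, unnecessarily but correctly, that the higher letters are \emph{not} fixed); otherwise the arguments are identical.
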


\begin{proof}
Every letter $h$, $0 \le h \le a-2$, is fixed by the above algorithm. So for $b \le d \le a-2$,
both letters $d$ and $a+b-2-d$ are fixed, and, by Corollary~24 in~\cite{FrPeSv2}, parallel
addition is possible on any alphabet of the form $\A = \{-d, \ldots, a+b-2-d\}$ with $b \le d \le a-2$.
\end{proof}

It is an open question to prove that in base~$\beta$ satisfying $\beta^2 = a\beta -b$,
with $a \ge b+2$, $b \ge 2$, parallel addition is not possible on alphabets of positive and
negative contiguous integer digits not containing $\{-b, \ldots,0, \ldots,b\}$, as it is the
case in rational base $\beta = a/b$ when $b \ge 2$, see \cite{FrPeSv2}.


\section{Upper bounds on minimal alphabet allowing block parallel addition}\label{kblockhelps}


\begin{thm}\label{Pavel}
Given a base~$\beta$ and an alphabet~$\B$ of contiguous integers containing~$0$; let us
suppose that there exist non-negative integers $\ell$ and $s$ such that for
any $x = x_{n} \cdots x_0\bullet$ and $y = y_{n} \cdots y_0\bullet$ from
${\rm Fin}_\B(\beta )$ the sum $x+y$ has a $(\beta,  \B)$-representation of the form
$$z= x+y =  z_{n+\ell} \cdots z_0\bullet z_{-1} \cdots z_{-s}\,.$$
Then there exists a $k$-block $3$-local function performing parallel addition
in base~$\beta$ on the alphabet $\A = \B+ \B$, where $k= 2(\ell+s)$.
\end{thm}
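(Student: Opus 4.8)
The plan is to unfold the definition of block parallel addition and then use the hypothesis as a \emph{carry confinement} tool. By Definition~\ref{kblock} (together with the remark that addition is parallel when the conversion from $\A+\A$ to $\A$ is), it suffices to produce, for $k=2(\ell+s)$, a $3$-local digit set conversion in base $\beta^k$ from $(\A+\A)_{(k)}$ to $\A_{(k)}$. Concretely, I group the positions of $\Z$ into blocks, block $i$ occupying the positions $ik,ik+1,\dots,ik+k-1$, and I will define the output digits on block $i$ as a function of the input digits on blocks $i-1$, $i$, $i+1$ only. The first observation I would record is a windowed reformulation of the hypothesis: since local functions and $\beta$-representations are invariant under multiplication by powers of $\beta$, the hypothesis says that the sum of any two $\B$-numbers supported on an interval of positions $[a,b]$ admits a $\B$-representation supported on $[a-s,\,b+\ell]$.

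Next I would decompose the input $(u_j)_{j\in\Z}$, with $u_j\in\A+\A$. Because $\A=\B+\B$, we have $\A+\A=\B+\B+\B+\B$, so I fix once and for all, for each value $c\in\A+\A$, a decomposition $c=b^{(1)}+b^{(2)}+b^{(3)}+b^{(4)}$ with $b^{(t)}\in\B$ and $0\mapsto(0,0,0,0)$. Applying this digitwise inside block $i$ yields four $\B$-numbers $X^{(1)}_i,\dots,X^{(4)}_i$, each supported on $[ik,ik+k-1]$, whose sum equals the value carried by block~$i$, namely $\sum_{r=0}^{k-1}u_{ik+r}\beta^{ik+r}$. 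Using the windowed hypothesis three times inside the block --- first to replace $X^{(1)}_i+X^{(2)}_i$ and $X^{(3)}_i+X^{(4)}_i$ by single $\B$-numbers supported on $[ik-s,\,ik+k-1+\ell]$, then to add those two --- I obtain a single $\B$-number $W_i$ supported on $[ik-2s,\,ik+k-1+2\ell]$. By construction $W_i$ depends only on the input digits of block~$i$, and $\sum_i W_i$ equals the input value.

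It remains to read off the output digits from $\sum_i W_i$ and to check that they land in $\A$; this is the step where the exact value $k=2(\ell+s)$ is forced, and it is the heart of the argument. Since $2\ell\le k$ and $2s\le k$, the high overflow of $W_i$ (positions above block~$i$) falls into the lowest $2\ell$ positions of block~$i+1$, and its low overflow falls into the highest $2s$ positions of block~$i-1$; in particular no $W_i$ reaches beyond an adjacent block, so a position of block~$i$ can receive a nonzero contribution only from $W_{i-1}$, $W_i$, $W_{i+1}$. The key point is that, within block~$i$, the region hit by the overflow of $W_{i-1}$ (its lowest $2\ell$ positions) and the region hit by the overflow of $W_{i+1}$ (its highest $2s$ positions) are \emph{disjoint and exactly tile the block}, precisely because $2\ell+2s=k$. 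Consequently every position of block~$i$ receives the digit of $W_i$ plus the digit of exactly one of its two neighbours, so the output digit lies in $\B+\B=\A$; the value and the finiteness of the support are preserved, and the output on block~$i$ depends only on blocks $i-1,i,i+1$. This yields the desired $k$-block $3$-local function. The main obstacle, and the only genuinely delicate point, is this tiling/disjointness count forced by $k=2(\ell+s)$: everything else is a routine composition of the hypothesis with shift-invariance and the fixed digit decomposition.
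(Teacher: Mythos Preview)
Your argument is correct and is essentially the paper's own proof: both collapse each input block in $(\A+\A)_{(k)}$ to a single $\B$-word supported on $[-2s,\,k-1+2\ell]$ via two nested applications of the hypothesis, and then use $k=2(\ell+s)$ so that the left and right overflows from neighbouring blocks land in disjoint halves of the current block, yielding digits in $\B+\B=\A$. The only cosmetic difference is that the paper packages the overflow decomposition as three named pieces $L(u),C(u),S(u)$ with $u=L(u)\beta^k+C(u)+S(u)\beta^{-2s}$ and defines $\Phi(f,g,h)=L(h)+C(g)+S(f)\beta^{2\ell}$ explicitly, whereas you phrase the same thing geometrically as a tiling of the block by the two overflow regions.
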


\begin{proof}
According to the assumptions, any $x= \sum_{j=0}^{k-1}  x_j \beta^j$ with $x_j \in \B+ \B$
can be written as $x= \sum_{j=-s}^{k+\ell-1}  x'_j \beta^j$  with $x'_j \in \B$. And
thus any $z= \sum_{j=0}^{k-1}  z_j \beta^j$ with $z_j \in \A+ \A$ can be written as
$$z= \sum_{j=-2s}^{k+2\ell-1} z'_j \beta^j \quad \hbox{  with } \ z'_j \in \B\,.$$
It means that for any $u\in (\A + \A)_{(k)}$ there exist
$$ L(u) \in \B_{(2\ell)}, \quad C(u) \in \B_{(k)},  \ \ \hbox{and} \ \ S(u) \in \B_{(2s)}$$
such that
\begin{equation}\label{3block}
u=L(u)\beta^k + C(u)+  S(u)\beta^{-2s}\, .
\end{equation}
It may happen that for $u\in (\A+ \A)_{(k)}$ there exist several triples $L(u)$, $C(u)$, $S(u)$
with the required property. But for any $u$, we fix  just one triple. We can set
\begin{equation}\label{choice}
L(u)= S(u)=0 \quad \hbox{and} \quad C(u) = u \quad  \hbox{for any \ } u\in   \B_{(k)}.
\end{equation}
In particular,  we put $L(0) =C(0)=S(0)=0$.\\
Let us  define a $3$-local function $\Phi$ with domain $(\A_{(k)} + \A_{(k)})^3$ by
\begin{equation}\label{3block1}
\Phi(f, g, h) = L(h) + C(g)+   S(f) \beta^{2\ell}\,.
\end{equation}
As $k=2(\ell+s)$, $\B_{(k)} = \B_{(2\ell)} +\B_{(2s)}\beta^{2\ell} $, and the function
$\Phi$ maps $(\A_{(k)} + \A_{(k)})^3$ to $\B_{(k)} + \B_{(k)} = \A_{(k)}$.

Let $\cdots u_{2}u_1u_0u_{-1}u_{-2} \cdots $ be a sequence with finitely many non-zero $u_j \in \A_{(k)} + \A_{(k)}$.
We show that
$$ \sum_{j\in \Z}u_j \beta^{jk} =  \sum_{j\in \Z}v_j \beta^{jk}\,, \ \hbox{where}\ v_j =  \Phi(u_{j+1}\,u_{j}\, u_{j-1}).$$
Indeed, by~\eqref{3block} and~\eqref{3block1}, we have
$$\sum\limits_{j\in \Z}u_j \beta^{jk} = \sum\limits_{j\in\Z}L(u_j)\beta^{k(j+1)} + \sum\limits_{j\in\Z} C(u_j)\beta^{kj} + \sum\limits_{j\in\Z}S(u_j)\beta^{kj-2s}=$$

$$ = \sum\limits_{j\in \Z}L(u_{j-1})\beta^{kj} + \sum\limits_{j\in \Z} C(u_j)\beta^{kj} + \beta^{2\ell}\sum\limits_{j\in \Z}S(u_{j+1})\beta^{kj} = \sum\limits_{j\in\Z}\Phi\bigl(u_{j+1}\,u_j\,u_{j-1}\bigr)\beta^{kj}.
$$
Our choice $L(0)=C(0)=S(0)=0$ guarantees that  the sequence \ $\cdots v_{2}v_1v_0v_{-1}v_{-2} \cdots $
\ has only finitely many non-zero elements as well. Therefore, $\Phi$ is the desired $k$-block
 $3$-local function performing parallel addition in base~$\beta$ on the alphabet $\A = \B + \B$.
\end{proof}

\begin{remark}\label{fixation}
From equations~\eqref{choice} and~\eqref{3block1} in the previous proof we see
that $\Phi(u, u, u) = u$ for any $u\in\B_{(k)}$. It means that the infinite constant
sequence $(u)_{j\in\Z}$ is fixed by the corresponding parallel algorithm for any $u \in \B_{(k)}$.
\end{remark}

\begin{proposition}\label{with(PF)}
Let $\beta >1$ be a number with the (PF) Property. Then there exists $k\in\N$ such that
$k$-block parallel addition in base~$\beta$ is possible on the alphabet
 $\A = \{0, 1, \ldots, 2\lfloor \beta \rfloor\}$, and also on the alphabet
  $\A = \{-\lfloor \beta \rfloor, \ldots, -1, 0, 1, \ldots, \lfloor \beta \rfloor\}$.
\end{proposition}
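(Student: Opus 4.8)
The plan is to apply Theorem~\ref{Pavel} with the alphabet $\B = \{0,1,\ldots,\lfloor\beta\rfloor\}$, for which $\B+\B = \{0,1,\ldots,2\lfloor\beta\rfloor\} = \A$. Thus it suffices to produce two non-negative integers $\ell$ and $s$, \emph{independent of}~$n$, such that the sum $z = x+y$ of any two numbers $x = x_n\cdots x_0\bullet$ and $y = y_n\cdots y_0\bullet$ from ${\rm Fin}_\B(\beta)$ admits a $(\beta,\B)$-representation of the shape $z_{n+\ell}\cdots z_0\bullet z_{-1}\cdots z_{-s}$. Since each power $\beta^j$ with $j\ge 0$ has the finite greedy expansion $10^j\bullet$, the (PF) Property (applied repeatedly to the digits of $x$ and $y$) guarantees that $z\in\N[\beta]$ has a finite greedy $\beta$-expansion; I will use precisely this greedy expansion, whose digits then all lie in $\C_\beta\subseteq\B$. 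It remains to bound its two ends uniformly in~$n$.

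The bound on the integer part is immediate: $z = x+y \le 2\lfloor\beta\rfloor\sum_{j=0}^n\beta^j < \frac{2\lfloor\beta\rfloor}{\beta-1}\,\beta^{n+1}$, so fixing any $\ell$ with $\beta^\ell\ge\frac{2\lfloor\beta\rfloor}{\beta-1}$ forces the top index of the greedy expansion of $z$ to be at most $n+\ell$.

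The genuine difficulty, and the main obstacle, is the uniform bound on the fractional length $s$, because the (PF) Property yields finiteness of each individual expansion but a priori no common bound. Here is the key point. Write the greedy expansion of $z$ as $z = I + r$, where $I = \sum_{j\ge 0}z_j\beta^j$ is its integer part and $r = \sum_{j<0}z_j\beta^j\in[0,1)$ is its fractional part. Both $z$ and $I$ lie in $\Z[\beta]$, hence so does $r = z - I$; thus $r\in\Z[\beta]\cap[0,1)$. Now $\beta$ is a Pisot number, so every algebraic conjugate $\beta_i$ satisfies $|\beta_i|<1$; writing $\sigma_i$ for the embedding $\beta\mapsto\beta_i$, the estimates $|\sigma_i(x)|,|\sigma_i(y)|,|\sigma_i(I)|\le\frac{\lfloor\beta\rfloor}{1-|\beta_i|}$ give $|\sigma_i(r)|\le \frac{3\lfloor\beta\rfloor}{1-|\beta_i|}=:K_i$, a bound independent of~$n$. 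Under the Minkowski embedding, $\Z[\beta]$ is a full lattice in $\mathbb{R}^d$, and the image of every fractional part $r$ arising in this way lies in the fixed bounded region cut out by $0\le r<1$ and $|\sigma_i(r)|\le K_i$; consequently only finitely many values of $r$ occur. Taking $s$ to be the maximal fractional length of the greedy expansions of these finitely many numbers gives the desired uniform bound.

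With $\ell$ and $s$ in hand, Theorem~\ref{Pavel} furnishes a $k$-block $3$-local function performing parallel addition on $\A = \B+\B = \{0,1,\ldots,2\lfloor\beta\rfloor\}$ with $k = 2(\ell+s)$, which is the first assertion. For the symmetric alphabet I would invoke Remark~\ref{fixation}: the constructed algorithm fixes every constant sequence over $\B_{(k)}$, in particular the one corresponding to the digit $\lfloor\beta\rfloor$. Taking $d=\lfloor\beta\rfloor$ (so that $2\lfloor\beta\rfloor-d=\lfloor\beta\rfloor=d$ is itself a fixed letter) and applying the shift argument of Corollary~24 in~\cite{FrPeSv2} in base $\beta^k$ then moves the algorithm to $\{-\lfloor\beta\rfloor,\ldots,-1,0,1,\ldots,\lfloor\beta\rfloor\}$, as claimed. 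The crux of the whole argument is the fractional-length bound, and the lattice argument above — resting on the identity $r=z-I\in\Z[\beta]$ together with the Pisot property — is exactly what upgrades the pointwise finiteness coming from (PF) into the required uniform bound.
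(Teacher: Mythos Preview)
Your proof is correct and follows the paper's overall architecture---choose $\B=\{0,1,\ldots,\lfloor\beta\rfloor\}$, verify the hypotheses of Theorem~\ref{Pavel}, then shift via Remark~\ref{fixation} and Corollary~24 of~\cite{FrPeSv2}---but the way you obtain the uniform fractional bound~$s$ is genuinely different from the paper's argument.

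The paper does not use the lattice argument. Instead it first observes that every element of $\B[\beta]=\{\sum_{j=0}^n x_j\beta^j : x_j\in\B\}$ can be written as a sum of at most $h$ $\beta$-integers (elements of $\Z_\beta$), where $h=2$ when $t_1>1$ and $h$ is the position of the second nonzero digit of $d_\beta(1)$ when $t_1=1$. It then invokes a theorem of Bernat~\cite{Bernat}: for a Perron number with no conjugate on the unit circle there is a constant $L_\oplus$ bounding the number of fractional digits in the greedy expansion of a sum of two $\beta$-integers, and this constant is effectively computable for Parry numbers. This yields the explicit value $s=hL_\oplus$.

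Your route---showing $r=z-I\in\Z[\beta]\cap[0,1)$, bounding all Galois conjugates of $r$ uniformly via the Pisot property, and invoking discreteness of $\Z[\beta]$ under the Minkowski embedding to conclude only finitely many such $r$ occur---is entirely self-contained and avoids both the $\beta$-integer decomposition and the citation to Bernat. In essence you are reproving in situ the finiteness that Bernat's $L_\oplus$ encodes. What you lose is explicitness: the paper's $s=hL_\oplus$ is a concrete quantity (and feeds directly into Example~\ref{Tribonacci}, where $L_\oplus=5$ for Tribonacci gives $k=14$), whereas your $s$ is merely the maximum over a finite but otherwise undescribed set. Both arguments are valid; yours is cleaner as pure existence, the paper's is better suited for actually computing~$k$.
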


\begin{proof}
Let $d_\beta(1) = t_1t_2\cdots$ be the R\'enyi expansion of unity in base~$\beta$;
obviously, $t_1= \lfloor \beta \rfloor$. We apply the previous Theorem~\ref{Pavel}
to $\B = \{0, 1, \ldots, \lfloor \beta \rfloor\}$. In \cite{BuGaFrKr}, the numbers
$x$ for which the greedy expansion in base~$\beta$ has a form $x_nx_{n-1}\cdots x_1x_0\bullet$
were called $\beta$-integers. The set of $\beta$-integers is usually denoted $\Z_\beta$.
Using the Parry lexicographical condition, we can write formally
$$\Z_\beta = \Bigl\{ \sum_{j=0}^n x_j\beta^j \mid x_j \in  \B\ \  \hbox{and } \ \
 x_jx_{j-1}\cdots x_1x_0 \prec t_1t_2t_3\cdots \ \ \hbox{for any} \ j=0,1,\ldots, n \Bigr\}\, .$$

Let us denote by
$$\B[\beta]= \Bigl\{ \sum_{j=0}^n x_j\beta^j \mid x_j \in  \B \Bigr\}$$
Clearly, $\Z_\beta \subset  \B[\beta]$, but, in general, the opposite inclusion does
not hold. Nevertheless, for a given base~$\beta$ with the (PF) Property, there exists
 a constant $h\in\N$ such that any $x \in \B[\beta]$ can be written as a sum of at
 most $h$ elements from $\Z_\beta$:
\begin{itemize}
    \item If $t_1 >1$, then $h=2$, since any coefficient $x_j \in \B$ can be written
    as $x_j= x_j'+ x_j''$, where $x_j', x_j'' < t_1$. Thus $\sum_{j=0}^n x_j\beta^j
    = \sum_{j=0}^n x_j'\beta^j+ \sum_{j=0}^n x_j''\beta^j$ and coefficients in both sums on the right side satisfy the Parry condition.
    \item If $t_1 =1$, we can take as $h$ the minimal integer $h\geq 2$ such
    that $t_{h}\neq 0$. This choice of $h$ guarantees that $d_\beta(1) = t_10^{h-2}t_{h}\cdots$
    and that any representation $z_nz_{n-1}\cdots z_1z_0\bullet z_{-1} z_{-2} \cdots$ of a number
     $z$ in which each nonzero coefficient $ z_j = 1$ is followed by $h-1$ zeros $ z_{j-1} = z_{j-2} = \cdots = z_{j-h +1} = 0$,
     is already the greedy expansion of $z$.
        Therefore, any $x =\sum_{j=0}^n x_j\beta^j \in \B[\beta]$ can be
        written as $x=x^{(0)} +  x^{(1)} + \cdots + x^{(h-1)}$, with $x^{(c)} = \sum_{j=0}^n x_j^{(c)}\beta^j \in \Z_{\beta}$ defined by
        $$ x_j^{(c)} = \left\{ \begin{array}{cl}
            0 & \hbox{if\ } j\neq c \mod h\\
            x_j & \hbox{if\ } j = c \mod h \,.
        \end{array}\right. $$
\end{itemize}

Bernat studies in \cite{Bernat} the number of fractional digits in the greedy expansion
of $x+y$ of two $\beta$-integers $x$ and $y$. He shows that if $\beta$ is a Perron
 number (i.e., an algebraic integer with all its algebraic conjugates of modulus strictly
 less than $\beta$) with no algebraic conjugate of modulus~$1$, then there exists a
  constant $L_{\oplus}\in\N$, such that if $x+y$ has finite greedy $\beta$-expansion,
  then the number of fractional digits in the greedy expansion of $x+y$ is less than or
  equal to $L_{\oplus}$. Let us stress that the value $L_{\oplus}$ is effectively computable
  when $\beta$ is a Parry number. Since our base~$\beta$ has the (PF) Property, the greedy
   expansion of the sum of any two $\beta$-integers is finite, and thus we can apply the
   previous Theorem~\ref{Pavel} with $s = h L_{\oplus}$.

In order to exploit the Theorem~\ref{Pavel}, we have to find also a suitable $\ell$.
Let $\ell$ be the smallest integer such that $\frac{2\lfloor\beta \rfloor }{\beta-1} < \beta^\ell$.
Since for any $x \in \B[\beta]$ we have $x = x_{n} \cdots x_0\bullet \leq \lfloor\beta \rfloor \frac{\beta^{n+1}-1}{\beta-1}$,
we can estimate $x+y = x_{n} \cdots x_0\bullet \ + \ y_{n} \cdots y_0\bullet \leq 2\lfloor\beta \rfloor \frac{\beta^{n+1}}{\beta-1} < \beta^{n+\ell+1}$.
The inequality $z= x+y <  \beta^{n+\ell+1}$ implies that at least one representation of $z$ (namely the greedy expansion prolonged
to the left by zero coefficients if needed) has the form $z = z_{n+\ell} \cdots z_0\bullet z_{-1} z_{-2}\cdots$.

Using Theorem~\ref{Pavel}, we have proved that parallel addition is possible on the alphabet
${\A} = \{0,1,\ldots, 2 \lfloor \beta \rfloor\}$. According to Remark~\ref{fixation}, the
 sequence $(h)_{j\in \Z}$ is fixed by the algorithm for parallel addition for any $h \in \{ 0,1, \ldots, \lfloor \beta
\rfloor \} = \B$. Therefore, due to Corollary~24 in~\cite{FrPeSv2}, the alphabet
${\A} - \lfloor \beta \rfloor = \{-\lfloor \beta \rfloor, \ldots, 0, \ldots, \lfloor \beta \rfloor\}$ allows parallel addition as well.
\end{proof}

Combining Proposition~\ref{with(PF)}, Theorem~\ref{nonSimple}, and Theorem~\ref{Simple}, we can derive the following conclusions:

\begin{corollary}\label{ConclusionSimple}
Let $d_\beta(1) = t_1t_2\cdots t_m$, with $t_1 \ge t_2 \ge \cdots \ge t_m \ge 1$ be
the R\'enyi expansion of~$1$ in base~$\beta$. Then there exists $M \in \N$ such that
parallel addition by a $k$-block local function in a non-integer base~$\beta$ is possible
on the alphabet $\A = \{0,1,\ldots, M\}$ with $t_1 + t_m \leq M \leq 2t_1$.
\end{corollary}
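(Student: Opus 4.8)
The plan is to obtain the asserted interval by combining two results already at our disposal: Theorem~\ref{Simple} supplies the lower bound $M \ge t_1 + t_m$, while Proposition~\ref{with(PF)} supplies an explicit alphabet witnessing the upper bound $M = 2t_1$. Thus I would not prove anything genuinely new, but rather check that the hypotheses of both statements are met under the single assumption $t_1 \ge t_2 \ge \cdots \ge t_m \ge 1$, and then read off that the smallest admissible $M$ must lie between $t_1 + t_m$ and $2t_1$.

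For the lower bound, I would first observe that since $\beta$ is a non-integer base, its R\'enyi expansion $d_\beta(1) = t_1 t_2 \cdots t_m$ cannot reduce to a single digit: a one-digit expansion $d_\beta(1) = t_1$ would force $\beta = t_1 \in \N$. Hence $m \ge 2$. The monotone chain $t_1 \ge t_2 \ge \cdots \ge t_m \ge 1$ gives precisely $1 \le t_m \le t_i$ for every $1 \le i \le m$, which is exactly the hypothesis of Theorem~\ref{Simple}. Applying that theorem, any alphabet $\A = \{0, 1, \ldots, M\}$ on which block parallel addition can be performed must satisfy $M \ge t_1 + t_m$.

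For the existence together with the upper bound, I would invoke the classification recalled in Section~\ref{Renyi}: a finite expansion $d_\beta(1) = t_1 \cdots t_m$ with $t_1 \ge t_2 \ge \cdots \ge t_m \ge 1$ guarantees the (F) Property, and therefore the (PF) Property. Proposition~\ref{with(PF)} then yields some $k \in \N$ for which $k$-block parallel addition is possible on $\A = \{0, 1, \ldots, 2\lfloor \beta \rfloor\}$. Since $t_1 = \lfloor \beta \rfloor$, this alphabet is exactly $\{0, 1, \ldots, 2t_1\}$, so $M = 2t_1$ is admissible; and $t_1 + t_m \le 2t_1$ holds because $t_m \le t_1$. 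Combining the two bounds, the minimal $M$ admitting $k$-block parallel addition satisfies $t_1 + t_m \le M \le 2t_1$, as claimed.

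There is no genuine obstacle here, as the statement is a synthesis of earlier results; the only points demanding a moment's care are the verification that $m \ge 2$ (so that Theorem~\ref{Simple} is applicable in the first place) and the identification $\lfloor \beta \rfloor = t_1$, which lets the alphabet furnished by Proposition~\ref{with(PF)} be rewritten with its right endpoint equal to $2t_1$ and thus placed inside the target interval.
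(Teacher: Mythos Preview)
Your proposal is correct and follows exactly the approach the paper itself indicates: the corollary is stated there without a separate proof, merely prefaced by ``Combining Proposition~\ref{with(PF)}, Theorem~\ref{nonSimple}, and Theorem~\ref{Simple}, we can derive the following conclusions,'' and you have carried out precisely that combination, including the small checks (that $m\ge 2$ for a non-integer base, that the monotonicity hypothesis matches that of Theorem~\ref{Simple}, and that $t_1=\lfloor\beta\rfloor$) needed to make the invocation of each result legitimate.
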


\begin{corollary} \label{ConclusionNonSimple}
Let $d_\beta(1) = t_1t_2\cdots t_mt^\omega$ with $t_1 > t_2\geq t_2\geq \cdots \geq  t_m > t\geq 1$
be the R\'enyi expansion of~$1$ in base~$\beta$.Then there exists $M \in \N$ such that parallel
addition by a $k$-block local function in base~$\beta$ is possible on the alphabet $\A = \{0,1,\ldots, M\}$ with $2t_1 - t_2 - 1 \leq M \leq 2t_1$.
\end{corollary}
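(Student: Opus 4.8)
The plan is to derive the conclusion by sandwiching the minimal admissible value of $M$ between the lower bound supplied by Theorem~\ref{nonSimple} and the upper bound supplied by Proposition~\ref{with(PF)}. The only genuine work lies in checking that the hypotheses on $d_\beta(1) = t_1t_2\cdots t_mt^\omega$ with $t_1 > t_2 \geq t_3 \geq \cdots \geq t_m > t \geq 1$ meet the requirements of both results; once this bookkeeping is done, the two inequalities combine immediately.

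First I would establish the upper bound $M \le 2t_1$. The assumed inequality chain $t_1 \geq t_2 \geq \cdots \geq t_m > t \geq 1$ is exactly the second class of Pisot numbers with the (PF) Property recalled in Section~\ref{Renyi} (following \cite{FrSo}); hence $\beta$ satisfies the (PF) Property. Proposition~\ref{with(PF)} then yields some $k \in \N$ for which $k$-block parallel addition is possible on $\{0, 1, \ldots, 2\lfloor \beta \rfloor\}$. Since the first digit of the R\'enyi expansion of~$1$ always equals $\lfloor \beta \rfloor$, we have $t_1 = \lfloor \beta \rfloor$, so this alphabet is precisely $\{0, 1, \ldots, 2t_1\}$. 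Thus $M = 2t_1$ is admissible, which certifies the upper end of the range and, in particular, shows that the set of admissible values of $M$ is nonempty.

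Next I would invoke the lower bound. Here the periodic part of $d_\beta(1)$ is the single digit $t$, so in the notation of Theorem~\ref{nonSimple} we have $p = 1$ and $t_{m+1} = t$. A short case distinction on $m$ confirms that one of the three assumptions of Theorem~\ref{nonSimple} holds: if $m = 1$ we are in case~(1), namely $m = p = 1$; if $m \geq 2$ we are in case~(3), since $t_1 > t_2$, the chain $t_2 \geq t_3 \geq \cdots \geq t_m$ gives $t_2 \geq t_j$ for $2 \leq j \leq m$, and $t_2 \geq t_m > t$ furnishes the required strict inequality $t_2 > t_{m+1}$. Consequently Theorem~\ref{nonSimple} applies, and every alphabet $\{0, 1, \ldots, M\}$ on which block parallel addition is possible must satisfy $M \geq 2t_1 - t_2 - 1$.

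Finally I would combine the two estimates. Let $M_0$ be the smallest integer such that block parallel addition is possible on $\{0, 1, \ldots, M_0\}$; this minimum exists because the admissible set is nonempty (it contains $2t_1$) and bounded below by~$0$. Applying the lower bound to $M_0$ gives $M_0 \geq 2t_1 - t_2 - 1$, while admissibility of $2t_1$ gives $M_0 \leq 2t_1$, so $M = M_0$ is the desired value. I expect the main (indeed the only) obstacle to be the careful reconciliation of indexing conventions --- matching the single-digit period $t$ with $t_{m+1}$, and in particular handling the degenerate preperiod $m = 1$, where the $t_2$ appearing in the bound $2t_1 - t_2 - 1$ coincides with the repeated digit $t$ --- rather than any substantive new argument, since all the hard content is already contained in Theorem~\ref{nonSimple} and Proposition~\ref{with(PF)}.
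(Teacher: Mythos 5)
Your proof is correct and follows exactly the route the paper intends: the paper gives no explicit proof of this corollary beyond the remark that it is obtained by ``combining Proposition~\ref{with(PF)}, Theorem~\ref{nonSimple}, and Theorem~\ref{Simple}'', and your write-up simply fills in that combination --- verifying the (PF) Property via the second class from \cite{FrSo} to get admissibility of $M=2t_1=2\lfloor\beta\rfloor$, and checking cases (1) and (3) of Theorem~\ref{nonSimple} (with $p=1$, $t_{m+1}=t$) for the lower bound. The hypothesis verification, including the degenerate $m=1$ case, is exactly the bookkeeping the paper leaves to the reader.
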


On those bases $\beta$ that are $d$-bonacci numbers we will demonstrate how the
concept of $k$-block local function can substantially reduce the cardinality of alphabet which allows parallel addition:

\begin{corollary}\label{DBonacci} Let $\beta$ be  a $d$-bonacci number for some $d\in\N$, $d \ge 2$.
\begin{itemize}
    \item If an alphabet~$\A$ allows $1$-block parallel addition in base~$\beta$,
    then its cardinality is $\#\A \geq d+1$.
    \item There exists $k\in\N$ such that $k$-block parallel addition in base~$\beta$ is
    possible on the alphabets $\A = \{0,1,2\}$ and $\A = \{-1,0,1\}$, and these alphabets cannot be further reduced.
\end{itemize}
\end{corollary}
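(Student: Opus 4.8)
The plan is to prove Corollary~\ref{DBonacci} by assembling the machinery already built in the paper and specializing it to the $d$-bonacci base, whose R\'enyi expansion of unity is $d_\beta(1) = 1^d = \underbrace{11\cdots1}_{d}$, i.e.\ $t_1 = t_2 = \cdots = t_d = 1$ and $m=d$. The two bullet points require opposite-direction bounds, so I would treat them separately.

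For the lower bound (first bullet), I would invoke Theorem~\ref{Simple}. Since $d_\beta(1) = t_1\cdots t_m$ is finite with $m=d\ge 2$ and $1\le t_m\le t_i$ (here all equal to~$1$), the hypotheses are met, and the theorem yields that any non-negative alphabet $\A = \{0,1,\ldots,M\}$ allowing block parallel addition must satisfy $M\ge t_1 + t_m = 2$, hence $\#\A\ge 3$. But this only gives $\#\A\ge 3$, whereas I need $\#\A\ge d+1$ for \emph{$1$-block} addition. So Theorem~\ref{Simple} is insufficient for the sharp count; instead I would apply Theorem~\ref{zdola}. The minimal polynomial of the $d$-bonacci number is $f(X) = X^d - X^{d-1} - \cdots - X - 1$, so $f(1) = 1 - (d) \cdot 1 = 1-d$, giving $|f(1)| = d-1$. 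Since $\beta$ is a positive real, the theorem gives $\#\A \ge |f(1)| + 2 = d+1$. This is exactly the claimed lower bound for $1$-block addition.

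For the upper bound (second bullet), I would use Proposition~\ref{with(PF)}. The $d$-bonacci number satisfies the (PF) Property (stated in the introduction and also following from the (F)-Property criterion since $t_1\ge\cdots\ge t_m\ge 1$). Here $\lfloor\beta\rfloor = 1$ because $1<\beta<2$ for every $d\ge 2$. Therefore Proposition~\ref{with(PF)} guarantees some $k\in\N$ for which $k$-block parallel addition is possible on $\A = \{0,1,\ldots,2\lfloor\beta\rfloor\} = \{0,1,2\}$ and also on $\{-\lfloor\beta\rfloor,\ldots,\lfloor\beta\rfloor\} = \{-1,0,1\}$. It remains to argue that these three-letter alphabets cannot be reduced to two letters. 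For the non-negative alphabet $\{0,1\}$, Corollary~\ref{Dbonacci} already rules out $k$-block parallel addition, so $\{0,1,2\}$ is minimal among non-negative contiguous alphabets. For $\{-1,0,1\}$, a cardinality-$2$ contiguous alphabet containing~$0$ is either $\{0,1\}$ or $\{-1,0\}$; the former is excluded by Corollary~\ref{Dbonacci}, and the latter reduces to the former by the sign symmetry of addition (negating all digits), so neither admits block parallel addition.

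The main obstacle is the minimality claim for the mixed alphabet $\{-1,0,1\}$, since Corollary~\ref{Dbonacci} is phrased only for $\{0,1\}$. The cleanest route is to reduce the $\{-1,0\}$ case to the $\{0,1\}$ case via the obvious symmetry $x\mapsto -x$, which transports a parallel addition algorithm on $\{-1,0\}$ to one on $\{0,1\}$, contradicting Corollary~\ref{Dbonacci}. One should check that this negation map indeed preserves $p$-locality and the digit-set-conversion properties of Definition~\ref{kblock}, but this is routine since negation commutes with the base and acts letterwise.
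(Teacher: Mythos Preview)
Your proposal is correct and follows essentially the same route as the paper: Theorem~\ref{zdola} with $|f(1)|+2=d+1$ for the first bullet, and Proposition~\ref{with(PF)} with $\lfloor\beta\rfloor=1$ together with Corollary~\ref{Dbonacci} for the second. You are in fact more careful than the paper on the minimality clause, explicitly handling the $\{-1,0\}$ case via the negation symmetry $x\mapsto -x$; the paper simply cites Corollary~\ref{Dbonacci} and leaves this reduction implicit.
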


\begin{proof}
The minimal polynomial of a $d$-bonacci number is $f(X) = X^d - X^{d-1} - X^{d-2} - \cdots - X - 1$.
Theorem~\ref{zdola} says that $1$-block parallel addition is possible only on an alphabet with cardinality at least $|f(1)| +2 = d+1$.

The R\'enyi expansion of unity for a $d$-bonacci number is $d_\beta(1) = 1^d$, and
thus the $d$-bonacci number satisfies the (PF) Property. Since $\lfloor\beta\rfloor = 1$,
due to Proposition~\ref{with(PF)}, $k$-block parallel addition in base~$\beta$ is possible on
the alphabets $\A = \{0,1,2\}$ and $\A = \{-1,0,1\}$. With respect to Corollary~\ref{Dbonacci}, this alphabet is minimal.
\end{proof}

\begin{example}\label{Tribonacci}
In \cite{Bernat2}, Bernat computes the value of $L_{\oplus}$ for the Tribonacci base,
namely $L_{\oplus} = 5$. So the parameter $s$ in Theorem~\ref{Pavel} is equal to $5$. It is
easy to see that $\ell = 2$. Thus, addition in the Tribonacci base is $14$-block $3$-local
parallel on the alphabets $\A = \{0,1,2\}$ or $\A = \{-1,0,1\}$.
\end{example}

\begin{remark}\label{CNS}
This article deals mainly with positive bases~$\beta$. However,
Theorem~\ref{Pavel} can be applied to complex bases as well.
One such class of bases defines the so-called {\em Canonical Number
Systems} (CNS), see \cite{Kovacs} and \cite{Pethoe}.

An algebraic number~$\beta$ and the alphabet $\B = \{0, 1, \ldots, | N(\beta) |-1\}$,
where $N(\beta)$ denotes the norm of $\beta$ over $\Q$, form a Canonical Number System, if any
 element~$x$ of the ring of integers $\Z[\beta]$ has a unique representation in the
 form $x = \sum_{k=0}^n x_k \beta^k$, where $x_k \in \B$ and $x_n \neq 0$.

In particular, it means that the sum of two elements of
$\Z[\beta]$ has also a finite representation in the
 form $\sum_{k=0}^m x_k \beta^k$, where $x_k \in \B$ and $x_m \neq 0$, and thus in
Theorem~\ref{Pavel} we can set $s = 0$. It can be proved that CNS
guarantees also the existence of the constant $\ell$ required in
that theorem. We can conclude that, in CNS, block parallel
addition is possible on the alphabet $\A = \{0, 1, \ldots, 2
|N(\beta)|-2\}$ or in the alphabet $\A = \{-|N(\beta)|+1, \ldots, 0,
\ldots, |N(\beta)|-1\}$.

More specifically for the Penney numeration system, the base $\beta =
\imath-1$ has norm $N(\beta) = 2$, and together with the
alphabet $\B = \{0, 1\}$ forms a CNS. Therefore, due to
Theorem~\ref{Pavel}, block parallel addition in the Penney
numeration system is possible not only on the alphabet $\A = \{-1,
0, 1\}$ (as shown by Herreros), but also on alphabet $\A = \{0, 1,
2\}$.

\end{remark}


\section{Comments and open questions}


When designing the algorithms for parallel addition in a given
base~$\beta$,  we need to take into consideration three core
parameters:

1) the cardinality $\# \A$ of the used alphabet~$\A$,

2) the width~$p$ of the sliding window, i.e., the number~$p$
appearing in the  definition of the $p$-local function $\Phi$, and

3) the length~$k$ of the blocks in which we group the digits of the $(\beta, \A)$-representations for $k$-block parallel addition.

\noindent There are mathematical reasons (for example comparison
of numbers) and even  more technical reasons to minimize all these
three parameters. But intuitively, the smaller is one of the
parameters, the bigger have to be the other ones. The question
which relationship binds the values $\# \A$, $p$, and~$k$ is far
from being answered.

\bigskip

In that respect, we are able to list just several isolated observations made for specific bases:

\begin{itemize}
    \item In \cite{FrPeSv1}, we studied $1$-block parallel addition, i.e., $k$ was
     fixed to~$1$. For base~$\beta$ being the Fibonacci number (i.e. the golden mean
      $\frac{1+\sqrt{5}}{2}$), we gave a parallel algorithm for addition on the alphabet
       $\A = \{-3, \ldots, 0,\ldots, 3\}$ by a $13$-local function. On the other hand, for
       the same base, we have also described an algorithm for parallel addition on the minimal
       alphabet $\A = \{-1, 0,1\}$, where the corresponding function $\Phi$ is $21$-local.

    \item The $d$-bonacci bases illustrate that if we do not care about the length~$k$ of the
     blocks, the alphabet can be substantially reduced, namely to $\A = \{0,1,2\}$, see
     Corollary~\ref{DBonacci}. But the price for that is rather high; already for the Tribonacci
      base our algorithm requires blocks of length $k=14$, see Example~\ref{Tribonacci}.

    \item If we fix in the Penney numeration system the value $k=1$, an alphabet of cardinality~$5$ is
    necessary for parallel addition. Herreros in~\cite{Herreros} provided an algorithm for parallel
    addition in the Penney base $\beta = \imath - 1$ on the alphabet $\A = \{-1,0,1\}$, but his
    algorithm uses $k=4$. This value is not optimal; we have found (not yet published)
    that $k=2$ is enough to perform parallel addition on the alphabet $\A = \{-1, 0, 1\}$.
\end{itemize}

\bigskip

Besides the width~$p$ of the sliding window as such, there is
another characteristic  which is desired for the algorithms
performing parallel addition, namely to be {\em neighbour-free}.
This property has to do with the way how one decides within the
first step of the algorithm what value $q_j$ to choose at the
$j$-th position of the processed string; which is in fact the key
task of the algorithm, as otherwise, once having the correct set
of the values $q_j$ after the first step, one only deducts the
$q_j$-multiple of an appropriate form of a representation of zero,
and the task is finished. Being neighbour-free means that the
value $q_j$ depends only on the digit on the $j$-th position of
the processed string, irrespective of its neighbours. Note that
this is something else than being $1$-local! On the other hand, an
algorithm of parallel addition which is not neighbour-free, is
called {\em neighbour-sensitive}, see the discussion in~\cite{FrPeSv1}.

For integer bases, as explained in Remark~\ref{int-base}, the
concept of $k$-block  parallel addition with $k\ge 2$ is not
interesting from the point of view of the minimality of the
cardinality of the alphabet. However, grouping of digits into
$k$-blocks can improve the parallel algorithm in another way,
namely with respect to the neighbour-free property.

For instance, in base $\beta = 2$, there is $1$-block parallel
addition doable on the minimal alphabet $\A = \{-1,0,1\}$ by the
neighbour-sensitive algorithm of Chow and
Robertson~\cite{ChowRobertson}. But $2$-block addition here means
just addition in base $\beta^2= 4$ on alphabet $\A_{(2)} = \{-3,
\ldots, 0, \ldots, 3\}$, and is performable by the simpler
algorithm of Avizienis~\cite{Avizienis}, which is neighbour-free.

\bigskip

The most common reason why to work in a numeration system  with an
algebraic base~$\beta$, instead of a system with base~$2$ or~$10$,
consists in the requirement to perform precise computations in the
algebraic field $\mathbb{Q}(\beta)$. If the base~$\beta$ is not
'nice enough', we can choose another base $\gamma$ such that
$\mathbb{Q}(\beta) = \mathbb{Q}(\gamma)$ and then work in the
numeration system with the base $\gamma$. The question is which
base in $\mathbb{Q}(\beta)$ is 'nice enough' and how to find it
effectively.

\begin{itemize}
    \item  Certainly, the 'beauty' of the Pisot bases is not
    questionable. Q.~Cheng and J.~Zhu in\cite{China} described an algorithm
    for finding a Pisot number which generates the whole algebraic field $\mathbb{Q}(\gamma)$.
    \item From another point of view, a base allowing parallel addition
    on a binary alphabet would be 'beautiful' as well; but there is no example of such a base known yet. May it exist?
\end{itemize}


\section*{Acknowledgements}


The second author acknowledges financial support by   the Grant
Agency of the Czech Technical University in Prague, grant
SGS11/162/OHK4/3T/14.  The third and fourth authors acknowledge
financial support by the Czech Science Foundation grant 13-03538S.



\end{document}